\newtheorem{theorem}{Theorem}[section]
\newtheorem{corollary}{Corollary}
\newtheorem{lemma}[theorem]{Lemma}
\newtheorem{proposition}{Proposition}
\newtheorem{definition}[theorem]{Definition}
\newtheorem{remark}{Remark}
\newtheorem{example}{Example}
\newcommand{\EQ}{\begin{equation}}
\newcommand{\EN}{\end{equation}}
\newcommand{\w}{\mbox{wt}}
\newcommand{\zero}{{\mathbf{0}}}
\newcommand{\one}{{\mathbf{1}}}
\newcommand{\bh}{{\bf h}}
\newcommand{\bc}{{\bf c}}
\newcommand{\by}{{\bf y}}
\newcommand{\bx}{{\bf x}}
\newcommand{\bz}{{\bf z}}
\newcommand{\bv}{{\bf v}}
\newcommand{\bw}{{\bf w}}
\newcommand{\F}{\mathbb{F}}
\newcommand{\IA}{\operatorname{IA}}
\newcommand{\Aut}{\operatorname{Aut}}
\newcommand{\MAut}{\operatorname{MAut}}
\newcommand{\GL}{\operatorname{GL}}
\title{On completely regular and completely transitive codes derived from Hamming codes}
\author{J. Borges, J. Rif\`{a} and V. A. Zinoviev}
\begin{document}
\maketitle

\centerline{\scshape Joaquim Borges}
\medskip
{\footnotesize
 \centerline{Department of Information and Communications Engineering}
 \centerline{Universitat Aut\`{o}noma de Barcelona}
} 

\medskip

\centerline{\scshape Josep Rifà}
\medskip
{\footnotesize
 \centerline{Department of Information and Communications Engineering}
 \centerline{Universitat Aut\`{o}noma de Barcelona}
} 

\centerline{\scshape Victor Zinoviev}
\medskip
{\footnotesize
 \centerline{A.A. Kharkevich Institute for Problems of Information Transmission}
 \centerline{Russian Academy of Sciences}
} 

\bigskip

\begin{abstract}
Given a parity-check matrix $H_m$ of a $q$-ary Hamming code, we consider a partition of the columns into two subsets. Then, we consider the two codes that have these submatrices as parity-check matrices. We say that anyone of these two codes is the supplementary code of the other one.

We obtain that if one of these codes is a Hamming code, then the supplementary code is completely regular and completely transitive. If one of the codes is completely regular with covering radius $2$, then the supplementary code is also completely regular with covering radius at most $2$. Moreover, in this case, either both codes are completely transitive, or both are not.

With this technique, we obtain infinite families of completely regular and completely transitive codes which are quasi-perfect uniformly packed. 
\end{abstract}

\section{Introduction}
Let $\F_q$ be the finite field of order $q$. The {\em weight} of a vector $\bv\in\F_q^n$, denoted by $\w(\bv)$, is the number of nonzero coordinates of $\bv$. The vector of weight $0$, or {\em zero vector}, is denoted by $\zero$. The {\em distance} between two vectors $\bv,\bw\in\F_q^n$, denoted by $d(\bv,\bw)$, is the number of coordinates in which they differ. A subset $C\subset\F_q^n$ is called a $q$-{\em ary code} of length $n$. Denote by $d$ the {\em minimum distance} among codewords in $C$.  The {\em packing radius} of $C$ is $e=\lfloor (d-1)/2 \rfloor$ and $C$ is said to be an $e$-{\em error-correcting} code. Given any vector $\bv \in \F_q^n$, its
{\em distance to the code $C$} is $d(\bv,C)=\min_{\bx \in C}\{
d(\bv, \bx)\}$ and the {\em covering radius} of the code $C$ is
$\rho=\max_{\bv \in \F_q^n} \{d(\bv, C)\}$. Note that $e\leq \rho$. If $e=\rho$, then $C$ is a {\em perfect} code. If $e=\rho-1$, then $C$ is called a {\em quasi-perfect} code. If $C$ is a $k$-dimensional subspace of $\F_q^n$, then $C$ is {\em linear} and referred to as an $[n,k,d;\rho]_q$-code. If $C$ is linear of length $n$ and dimension $k$, then a {\em generator matrix} $G$ for $C$ is any $k\times n$ matrix with $k$ linearly independent codewords as rows. A {\em parity-check matrix} for $C$ is an $(n-k)\times n$ matrix $H$ such that $C$ is the null space of $H$, i.e. $H\bx^T=\zero^T$ if and only if $\bx\in C$. The {\em dual code} $C^\perp$ is the orthogonal subspace to $C$. Hence, $H$ generates $C^\perp$ and $G$ is a parity-check matrix for $C^\perp$.

A linear single-error-correcting ($e=1$) perfect code is called a {\em Hamming} code. Such a code has parameters
$$
[n=(q^m-1)/(q-1), k=n-m, d=3;\rho=1]_q\;\;\;(m>1)
$$
and is denoted by ${\cal H}_m$. A parity-check matrix for ${\cal H}_m$, denoted by $H_m$, contains a maximal set of $n=(q^m-1)/(q-1)$ pairwise linearly independent column vectors of length $m$ \cite{MacW}. The dual code ${\cal H}_m^\perp$ generated by $H_m$ is called {\em simplex} and it is a constant-weight code, that is, all nonzero codewords have the same weight $q^{m-1}$.

We denote by $~D=C+\bx~$ a
{\em coset} of  $C$, where $+$ means the componentwise addition
in $\F_q$.

For a given $q$-ary code $C$ of length $n$ and covering radius $\rho$,
define
\[
C(i)~=~\{\bx \in \F_q^n:\;d(\bx,C)=i\},\;\;i=0,1,\ldots,\rho.
\]
The sets $C(0)=C,C(1),\ldots,C(\rho)$ are called the {\em subconstituents} of $C$.

Say that two vectors $\bx$ and $\by$ are {\em neighbors} if
$d(\bx,\by)=1$. Given two vectors $\bx=(x_1,\ldots,x_n),\by=(y_1,\ldots,y_n)\in\F_q^n$, we say that $\by$ {\em covers} $\bx$ if $y_i=x_i$, for all $i$ such that $x_i\neq 0$.
\begin{definition}[\cite{Neum}]\label{de:1.1} A $q$-ary code $C$ of length $n$ and covering radius $\rho$ is {\em completely regular}, if
for all $l\geq 0$ every vector $\bx \in C(l)$ has the same number
$c_l$ of neighbors in $C(l-1)$ and the same number $b_l$ of
neighbors in $C(l+1)$. Define $a_l = (q-1){\cdot}n-b_l-c_l$ and
set $c_0=b_\rho=0$. The parameters $a_l$, $b_l$ and $c_l$ ($0\leq l\leq \rho$) are called {\em intersection numbers} and the sequence $\IA=\{b_0, \ldots, b_{\rho-1}; c_1,\ldots,
c_{\rho}\}$ is called the {\em intersection array} of $C$.
\end{definition}

Let $M$ be a monomial matrix, i.e. a matrix with exactly one
nonzero entry in each row and column. Such a matrix can be written as $M=DP$, where $D$ is a monomial diagonal matrix and $P$ is permutation matrix. If $q$ is prime, then
the automorphism group of $C$, $\Aut(C)$, consists of all monomial $n\times n$ matrices $M$ over
$\F_q$ such that $\bx M \in C$ for all $\bx \in C$. If $q$ is a
power of a prime number, then the monomial automorphism group of $C$ is denoted by $\MAut(C)$, however, $\Aut(C)$ also contains any field
automorphism of $\F_q$ which preserves $C$.

\begin{lemma}\label{transdual}
If $DP$ is the corresponding matrix to an automorphism $\alpha$ of a code (where $D$ is a monomial diagonal matrix and $P$ is a permutation matrix), then $D^{-1}P$ corresponds to an automorphism $\alpha'$ of the dual code.
\end{lemma}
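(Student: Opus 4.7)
The plan is to verify directly that right-multiplication by $D^{-1}P$ preserves $C^\perp$, and the key observation is that $DP$ and $D^{-1}P$ are inverse-transposes of one another. Concretely, I would first note the matrix identity
\[
(DP)(D^{-1}P)^T \;=\; DP\,P^T D^{-1} \;=\; D(PP^T)D^{-1} \;=\; D\,I\,D^{-1} \;=\; I,
\]
using only that $P$ is orthogonal ($PP^T=I$) and $D$ is diagonal (so $D^T=D$ and $D^{-1}$ commutes with $D$). This is the one genuine computation, and it is immediate.

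From this identity the dual invariance is a two-line argument. For any $\bx\in C$ and $\by\in C^\perp$,
\[
\la \bx(DP),\;\by(D^{-1}P)\ra \;=\; \bx(DP)(D^{-1}P)^T\by^T \;=\; \bx\by^T \;=\; \la\bx,\by\ra \;=\; 0.
\]
Because $\alpha$ is an automorphism of $C$, the map $\bx\mapsto \bx(DP)$ is a bijection of $C$ onto itself, so $\{\bx(DP):\bx\in C\}$ exhausts $C$. Hence $\la \bx',\,\by(D^{-1}P)\ra = 0$ for every $\bx'\in C$, which is exactly the defining condition for $\by(D^{-1}P)\in C^\perp$. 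Since $D^{-1}P$ is the product of a diagonal monomial matrix with a permutation matrix, it is itself a monomial matrix, so it determines a well-defined monomial automorphism $\alpha'$ of $C^\perp$.

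The only subtlety is bookkeeping with conventions: the paper takes automorphisms to act on the right ($\bx\mapsto \bx M$), which forces the dual action to be the inverse-transpose, and the happy coincidence that $(DP)^{-T}=P^{-T}D^{-T}=PD^{-1}$ admits the symmetric-looking form $D^{-1}P$ (after multiplying on the left by $D^{-1}$ and on the right by $D$, or equivalently by noting that conjugation of a diagonal matrix by a permutation is again diagonal) is really what makes the statement cosmetically clean. I do not expect any genuine obstacle — the content of the lemma is essentially this one-line orthogonality check, packaged to make the later results about dual codes and their automorphism groups transparent.
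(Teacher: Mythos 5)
Your proof is correct and self-contained; the paper itself offers no argument, merely citing \cite[Thm.~1.7.9]{HP}, and what you have written is precisely the standard verification given there: the identity $(DP)(D^{-1}P)^T=I$ plus the fact that $\bx\mapsto\bx(DP)$ maps $C$ onto $C$, so orthogonality is preserved. One inconsequential slip in your closing aside: $(DP)^{-T}=D^{-T}P^{-T}=D^{-1}P$ directly (inverse and transpose each reverse the order of a product, so doing both preserves it), not $P^{-T}D^{-T}$, and no conjugation step is needed; the main body of your argument does not rely on this remark.
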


\begin{proof}
See \cite[Thm. 1.7.9, p. 27]{HP}.
\end{proof}

\begin{remark}\label{transitius}
As a consequence of Lemma \ref{transdual}, $\alpha$ and $\alpha'$ are both transitive on the set of one-weight vectors, or both are not. Note also that if, for a code $C$, $\MAut(C)$ is transitive, then so is $\Aut(C)$ since $\MAut(C)\subseteq\Aut(C)$.
\end{remark}

It is well known, e.g. see \cite{MacW}, that the monomial automorphism group of a Hamming code ${\cal H}_m$ is isomorphic to the general linear group $\GL(m,q)$, which acts transitively on the set of one-weight vectors. In the binary case, the action of $\GL(m,2)$ on the set of coordinate positions is even doubly transitive.

The group $\Aut(C)$
acts on the set of cosets of $C$ in the following way: for all
$\pi\in \Aut(C)$ and for every vector $\bv \in \F_q^n$ we have
$\pi(\bv + C) = \pi(\bv) + C$.

\begin{definition}[\cite{Giu,Sole}]\label{de:1.3}
Let $C$ be a linear code over $\F_q$ with covering radius $\rho$.
Then $C$ is {\em completely transitive} if $\Aut(C)$ has $\rho +1$ orbits
when acts on the cosets of $C$.
\end{definition}

Since two cosets in the same orbit have the same weight
distribution, it is clear that any completely transitive code is
completely regular.

Completely regular and completely transitive codes are classical
subjects in algebraic coding theory, which are closely connected
with graph theory, combinatorial designs and algebraic
combinatorics. Existence, construction and enumeration of all such
codes are open hard problems (see \cite{BRZ2,BRZ3,BCN,Koo,Neum,Dam} and
references there).

It is well known that new completely regular codes can be obtained by the direct sum of perfect codes or,
more general, by the direct sum of completely regular codes with covering radius $1$ \cite{BZZ,Sole}.

In the current paper, starting from Hamming codes and choosing appropriate columns of their parity-check matrix, we obtain parity-check matrices for completely regular codes. More precisely, given the parity-check matrix $H_m$ of a $q$-ary Hamming code, we consider a partition of the columns of $H_m$ into two subsets. We consider these two subsets of columns as parity-check matrices of two codes, $A$ and $B$. We say that $B$ is the {\em supplementary code} of $A$ (and $A$ is the supplementary code of $B$). If $A$ or $B$ is a Hamming code, then the supplementary code is also completely regular and completely transitive. We point out that, in this case, the dual code of the supplementary code belongs to the family SU1 in \cite{Cald}. If $A$ or $B$ is a completely regular code with covering radius $2$, then the supplementary code is completely regular with covering radius at most $2$. Moreover, in this situation both codes are completely transitive or are not, simultaneously.

In this way, we construct infinite families of $q$-ary completely regular and completely transitive codes. It is worth mentioning that for fixed $q$, we obtain a growing number of completely regular codes as the length of the starting Hamming code increases.

In the next section, we recall several known results on completely regular codes, which we shall use later. The main results and constructions are presented in \Cref{main}.

\section{Preliminary results}\label{preliminars}

In this section we see several results we will need in the next sections.

\begin{lemma}[\protect{\cite{Neum}}]\label{graph}
Let $C$ be a completely regular code with covering radius $\rho$ and intersection array $\{b_0,\ldots,b_{\rho-1};c_1,\ldots,c_\rho\}$.
If $C(i)$ and $C(i+1)$, $0\leq i <\rho$, are two subconstituents of $C$, then
$$
b_i|C(i)|=c_{i+1}|C(i+1)|.
$$
\end{lemma}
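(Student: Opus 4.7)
The plan is to prove the identity by a standard double-counting argument on a bipartite incidence structure. I will count, in two different ways, the number $N$ of ordered pairs $(\bx,\by)$ with $\bx\in C(i)$, $\by\in C(i+1)$, and $d(\bx,\by)=1$ (that is, $\bx$ and $\by$ are neighbors in the Hamming graph on $\F_q^n$).

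First I would fix $\bx\in C(i)$ and count the pairs with first coordinate $\bx$. By \Cref{de:1.1}, every vector of $C(i)$ has exactly $b_i$ neighbors in $C(i+1)$. Summing over all $\bx\in C(i)$ gives $N=b_i\,|C(i)|$. Then I would fix $\by\in C(i+1)$ and count the pairs with second coordinate $\by$. Again by \Cref{de:1.1}, every vector of $C(i+1)$ has exactly $c_{i+1}$ neighbors in $C((i+1)-1)=C(i)$. Summing over all $\by\in C(i+1)$ gives $N=c_{i+1}\,|C(i+1)|$. Equating the two expressions yields $b_i\,|C(i)|=c_{i+1}\,|C(i+1)|$, as desired.

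There is essentially no obstacle here; the only thing to be careful about is that the definition of completely regular requires these neighbor counts to be the \emph{same} for every vector in a given subconstituent, which is precisely what allows the two sums to collapse to $b_i\,|C(i)|$ and $c_{i+1}\,|C(i+1)|$, respectively. I should also note that the relation $d(\bx,\by)=1$ is symmetric, so the two counts genuinely enumerate the same set of pairs, making the double count valid.
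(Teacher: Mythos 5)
Your double-counting argument is correct: the complete regularity of $C$ guarantees that every vertex of $C(i)$ has exactly $b_i$ neighbors in $C(i+1)$ and every vertex of $C(i+1)$ has exactly $c_{i+1}$ neighbors in $C(i)$, so counting the edges of the bipartite graph between $C(i)$ and $C(i+1)$ from each side gives the identity. The paper itself offers no proof, citing the result to Neumaier, and your argument is precisely the standard one; there is nothing to add.
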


Let $C\subset\F_q^n$ be a code. For any vector $\bx\in\F_q^n$ and for all $j=0,\ldots,n$, define $B_{\bx,j}$ as the number of codewords at distance $j$ from $\bx$:
$$
B_{\bx,j}=|\{\bz\in C\mid d(\bx,\bz)=j\}|.
$$

\begin{definition}[\protect{\cite{GvT}}]\label{def:2.5}
A quasi-perfect $e$-error-correcting $q$-ary code $C$ is called
{\em uniformly packed} if there exist natural numbers $\lambda$ and $\mu$
such that for any vector $\bx$:
$$
B_{\bx,e+1} = \left\{
\begin{array}{cl}
  \lambda & \mbox{ if } d(\bx,C)=e, \\
  \mu     & \mbox{ if } d(\bx,C)=e+1.
\end{array}\right.
$$
\end{definition}

Van Tilborg \cite{vTi} (see also \cite{Lind,SZZ})
showed that no nontrivial codes of this kind exist for
$e>3$.

\begin{proposition}[\protect{\cite{GvT}}, see also \cite{SZZ}]\label{UPCR}
A uniformly packed code is completely regular.
\end{proposition}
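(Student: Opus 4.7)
The plan is to show that each intersection number $c_l$ and $b_l$, for $l=0,\ldots,\rho$, is a well-defined constant independent of the choice of $\bx \in C(l)$. Since $C$ is quasi-perfect with packing radius $e$, we have $\rho = e+1$, so there are only three delicate indices to handle (beyond the trivial range $l<e$). For $\bx$ with $d(\bx,C)=l\le e$, I would exploit the unique nearest codeword $\bz \in C$ (guaranteed by the packing) and partition the $n(q-1)$ neighbors of $\bx$ according to how they perturb a coordinate of $\be := \bx-\bz$: nonzero-to-zero (Type A), nonzero-to-nonzero (Type B), zero-to-nonzero (Type C). These lie at distances $l-1$, $l$ and $l+1$ from $\bz$ respectively, with counts $l$, $l(q-2)$ and $(n-l)(q-1)$ depending only on $l$.

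For $l\le e-1$, packing translates distances from $\bz$ directly into distances from $C$, so the intersection numbers fall out immediately: $c_l=l$ and $b_l=(n-l)(q-1)$. For $l=e$, Type A still contributes $c_e=e$, and Type B neighbors lie in $C(e)$ because any second codeword within distance $e$ of a Type B neighbor would place two codewords within distance $2e$, contradicting the minimum distance $\ge 2e+1$. The delicate step is the Type C count: a Type C neighbor $\by$ sits at distance $e+1$ from $\bz$ but may still lie in $C(e)$ if some other codeword $\bz''$ realises $d(\by,\bz'')=e$. A triangle-inequality argument forces such a $\bz''$ to satisfy $d(\bx,\bz'')=e+1$, hence $\bz''$ is one of the $\lambda$ codewords counted by the uniform-packing hypothesis. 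I would then argue that the supports of $\bx-\bz$ and $\bx-\bz''$ must be disjoint: otherwise the neighbor of $\bx$ obtained by reducing a shared coordinate would realise $d(\bz,\bz'')\le 2e$, a contradiction. Disjointness forces each of the $e+1$ nonzero positions of $\bx-\bz''$ to produce exactly one Type C neighbor in $C(e)$, and packing uniqueness ensures neighbors from different $\bz''$'s are distinct, giving a total of $(e+1)\lambda$ such neighbors. Therefore $b_e=(n-e)(q-1)-(e+1)\lambda$.

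For $l=\rho=e+1$ we have $b_{e+1}=0$ by definition, and $c_{e+1}$ is counted by the same idea with $\mu$ in place of $\lambda$: every neighbor of $\bx$ in $C(e)$ has a unique nearest codeword $\bz''$ at distance $e$, triangle inequality forces $d(\bx,\bz'')=e+1$, and each of the $\mu$ such codewords contributes exactly $e+1$ distinct neighbors (one per nonzero coordinate of $\bx-\bz''$), yielding $c_{e+1}=(e+1)\mu$. All intersection numbers thus depend only on $l$, establishing complete regularity. The main obstacle I expect is the support-disjointness argument at $l=e$; once that is in place, the rest reduces to bookkeeping with the two uniform-packing constants.
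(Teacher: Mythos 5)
Your argument is correct. Note that the paper itself offers no proof of this proposition --- it is quoted from \cite{GvT} and \cite{SZZ} --- so there is nothing to compare against line by line; what you have reconstructed is essentially the classical argument from those references, including the standard intersection numbers $c_l=l$ and $b_l=(n-l)(q-1)$ for $l<e$, $c_e=e$, $b_e=(n-e)(q-1)-(e+1)\lambda$, $c_{e+1}=(e+1)\mu$, $b_{e+1}=0$. All the key steps check out: uniqueness of the nearest codeword at distance $\le e$, the triangle-inequality reduction showing that any codeword at distance $e$ from a Type~C neighbor of $\bx\in C(e)$ must itself be at distance exactly $e+1$ from $\bx$, and the distinctness of the contributed neighbors across different such codewords (two codewords at distance $e$ from the same vector would be at distance $\le 2e<d$). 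The one place where your wording is loose is the support-disjointness step: a neighbor of $\bx$ does not literally ``realise'' a distance between $\bz$ and $\bz''$. The clean version is either a direct count --- $\bz$ and $\bz''$ agree outside $\supp(\bx-\bz)\cup\supp(\bx-\bz'')$, so $d(\bz,\bz'')\le 2e+1-|\supp(\bx-\bz)\cap\supp(\bx-\bz'')|$, forcing the intersection to be empty --- or your intended route: if $i$ lies in both supports, the neighbor $\by$ with $y_i=z_i$ satisfies $d(\by,\bz)=e-1$ and $d(\by,\bz'')\le e+1$, whence $d(\bz,\bz'')\le 2e$, a contradiction. Either patch is immediate, so the proof stands.
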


For a code $C$, we denote by $s+1$ the number of nonzero terms in the dual
distance distribution of $C$, obtained by the MacWilliams transform. The parameter
$s$ was called {\em external distance} by Delsarte \cite{Del}, and is equal
to the number of nonzero weights of $C^\perp$ if $C$ is linear. The following properties show the importance of this parameter.

\begin{theorem}\label{params}
If $C$ is any code with packing radius $e$, covering radius $\rho$, and external distance $s$, then
\begin{itemize}
\item[(i)] {\cite{Del}} $\rho \leq s$.
\item[(ii)] {\cite{Del}} $C$ is perfect ($e=\rho$) if and only if $e=s$.
\item[(iii)] {\cite{GvT}} $C$ is quasi-perfect uniformly packed if and only if $s=e+1$.
\item[(iv)] {\cite{Sole}} If $C$ is completely regular, then $\rho=s$.
\end{itemize}
\end{theorem}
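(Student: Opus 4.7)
These four statements are classical and tightly interlocked; I would anchor the argument in Delsarte's linear programming framework and deduce the refinements from the base inequality in (i). Attacking (i) first: fix an arbitrary $\bx \in \F_q^n$ and examine the outer distribution $(B_{\bx,0},\ldots,B_{\bx,n})$ of distances from $\bx$ to $C$. Applying the MacWilliams identity to the coset $\bx+C$ relates this vector to the dual distance distribution of $C$, which by definition of $s$ has only $s$ nonzero off-origin entries $w_1,\dots,w_s$. The annihilator polynomial $\alpha(z) = \prod_{i=1}^{s}(z-w_i)$, of degree $s$, expanded in the Krawtchouk basis yields a linear relation forcing the support of $B_{\bx,\cdot}$ to have cardinality at most $s+1$. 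Since $d(\bx,C)$ sits in this support, it is bounded above by $s$; taking the maximum over $\bx$ gives $\rho \le s$.

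With (i) in hand, parts (ii) and (iii) come from pinning down the support structure more precisely. For (ii), if $C$ is perfect with $e=\rho$, then a Lloyd-type counting shows the coset outer distributions take only $e+1$ distinct values, whose MacWilliams transform has exactly $e$ nonzero dual weights, so $s=e$; conversely, $e=s$ combined with (i) and the always-true inequality $e\le\rho$ gives $e\le\rho\le s=e$, hence $e=\rho$. For (iii), the uniformly packed hypothesis forces $B_{\bx,j}$ for $j\le e+1$ to depend only on $d(\bx,C)$ (with just the two values $\lambda,\mu$ at $j=e+1$), producing a rank constraint on outer distributions that translates, through MacWilliams, into $s=e+1$. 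Conversely, $s=e+1$ together with (i) yields $\rho\le e+1$, and (ii) rules out $\rho=e$, so $\rho=e+1$; the two-valued behaviour of $B_{\bx,e+1}$ is then forced by the restricted support.

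For (iv), complete regularity means $B_{\bx,\cdot}$ is entirely a function of $d(\bx,C)$, so there are exactly $\rho+1$ distinct outer distribution vectors; this rigidity upgrades the support bound of (i) to an equality, forcing the dual code to have exactly $\rho$ nonzero weights, i.e.\ $s=\rho$. The main obstacle across the whole program is the bookkeeping in (i): choosing the right annihilator, expanding it in the Krawtchouk basis, and reading the support bound off the MacWilliams identity is where the delicate work sits. Parts (ii)--(iv) are then more mechanical, since each reduces to specializing the Delsarte framework of (i) together with a matching rigidity or lower-bound argument coming from the coset structure of $C$.
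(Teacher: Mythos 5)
First, a point of comparison: the paper gives no proof of this theorem at all --- each item is quoted with a citation (Delsarte, Goethals--van Tilborg, Sol\'e) and used as a black box --- so your proposal is measured against the classical arguments rather than against anything in the text. Your choice of framework (outer distributions $B_{\bx,j}$, the annihilator polynomial of the nonzero dual weights, Krawtchouk expansions) is the correct one, and the reductions you give for (ii), for the converse direction of (iii), and for deducing $s\le\rho$ in (iv) from the fact that a completely regular code has only $\rho+1$ distinct outer-distribution rows are the standard ones in outline.

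There is, however, a genuine gap at the foundation, in (i), on which the other parts lean. You assert that the annihilator relation forces ``the support of $B_{\bx,\cdot}$ to have cardinality at most $s+1$.'' This is false: take $C$ to be the binary even-weight code of length $n$; its dual is the repetition code, so $s=1$, yet for any odd-weight $\bx$ the set $\{j: B_{\bx,j}\neq 0\}$ consists of all odd $j\le n$, which has roughly $n/2$ elements. Moreover, even if such a cardinality bound held it would not locate the support, so it could not yield $d(\bx,C)=\min\{j: B_{\bx,j}\neq 0\}\le s$. What is actually true, and what you need, is a rank statement rather than a support statement: the identity $\sum_j B_{\bx,j}P_k(j)=|C|\sum\chi(\la\bx,\bu\ra)$ (the sum running over $\bu\in C^{\perp}$ of weight $k$) shows that this transform vanishes for every $k\notin\{0,w_1,\ldots,w_s\}$, so by Krawtchouk inversion the vector $(B_{\bx,j})_{j}$ lies in the span of the $s+1$ fixed vectors $(P_j(k))_{j}$ with $k\in\{0,w_1,\ldots,w_s\}$. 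If $d(\bx,C)>s$ then $B_{\bx,0}=\cdots=B_{\bx,s}=0$, and since the $(s+1)\times(s+1)$ matrix $(P_j(k))_{0\le j\le s}$ is nonsingular (each $P_j$ has exact degree $j$, so this is an invertible change of basis composed with a Vandermonde matrix at distinct nodes), all $s+1$ coefficients vanish, including the one proportional to $|C|$ --- a contradiction. This same rank-$(s+1)$ property of the outer-distribution matrix is also the missing ingredient needed to make your sketches of the forward direction of (iii) and of (iv) rigorous; without it, ``the rigidity upgrades the support bound to an equality'' does not parse into a proof.
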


\section{The new construction of completely regular codes}\label{main}

Let $H_m$ be the parity-check matrix of a $q$-ary Hamming code ${\cal H}_m$ of length $n=(q^m-1)/(q-1)$, where $m>1$. Take a non-empty subset of $n_A<n$ columns of $H_m$ as the parity-check matrix of a code $A$. Call $B$ the supplementary code that has as parity-check matrix the remaining $n_B=n-n_A$ columns of $H_m$. In this section, we see that if $A$ or $B$ is a completely regular code with covering radius $\rho(A)\leq 2$, then so is the supplementary code, under certain conditions.

For the rest of this section, we write $n_j=(q^j-1)/(q-1)$, for any integer value $j>0$.

\subsection{The case $\rho(A)=1$}\label{subseccio1}

Since there are no two linearly dependent columns in $H_m$, we have that, for $n_A\geq 3$, the minimum distance of $A$ (and of $B$, for $n_A\leq n-3$) is at least three and thus, the packing radius is at least 1. If $\rho(A)=1$ (hence $n_A\neq 2$), then $e=\rho(A)=1$ for $n_A\geq 3$, by \Cref{params}. Therefore, $A$ is a perfect Hamming code for $n_A>1$.

For $u\in\{1,\ldots,m-1\}$, $H_m$ can be written as:

\begin{equation}\label{form1}
H_m=\left[
\begin{array}{c|c}
H^*_u & H_{u,m}
\end{array}
\right],
\end{equation}

\noindent where the first $u$ rows of $H^*_u$ are as the parity-check matrix of ${\cal H}_u$ and the remaining $m-u$ rows are all-zero vectors. For the case $u=1$, the matrix $H^*_u$ is simply the column vector $(1,0,\ldots,0)^T$. We call $B_{u,m}=B$ the code that has parity-check matrix $H_{u,m}$. Note that for $n_A>1$, we have $A={\cal H}_u$.

\begin{lemma}\label{pesos}
The dual code of $B_{u,m}$, i.e. the code $B_{u,m}^\perp$ generated by $H_{u,m}$ has exactly two nonzero weights, namely, $w_1=q^{m-1}$ and $w_2=q^{m-1}-q^{u-1}$.
\end{lemma}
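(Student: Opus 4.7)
The plan is to exploit the fact that $H_{u,m}$ is obtained by deleting some columns of $H_m$, so codewords of $B_{u,m}^{\perp}$ are obtained by restricting simplex codewords to the surviving coordinates. Since every nonzero codeword of $\mathcal{H}_m^{\perp}$ has weight exactly $q^{m-1}$, computing the weight on the deleted coordinates lets me read off the weight on the kept coordinates by subtraction.

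Concretely, any codeword of $B_{u,m}^{\perp}$ is of the form $\bv H_{u,m}$ for some $\bv \in \F_q^m$, and I would write $\bv = (\bv_1, \bv_2)$ with $\bv_1 \in \F_q^u$ and $\bv_2 \in \F_q^{m-u}$. From the block form (\ref{form1}),
\begin{equation*}
\bv H_m = (\,\bv H^*_u \mid \bv H_{u,m}\,).
\end{equation*}
Because the last $m-u$ rows of $H^*_u$ are zero and its first $u$ rows form $H_u$, we have $\bv H^*_u = \bv_1 H_u$. In particular, if $\bv_1 = \zero$ then $\bv H^*_u = \zero$, and if $\bv_1 \neq \zero$ then $\bv_1 H_u$ is a nonzero codeword of the simplex code $\mathcal{H}_u^{\perp}$, hence of weight $q^{u-1}$.

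Assuming $\bv \neq \zero$, the codeword $\bv H_m$ is nonzero, so by the constant-weight property of $\mathcal{H}_m^{\perp}$ it has weight $q^{m-1}$. Splitting this into the contributions from the two blocks yields
\begin{equation*}
\wt(\bv H_{u,m}) = q^{m-1} - \wt(\bv_1 H_u).
\end{equation*}
Thus $\wt(\bv H_{u,m}) = q^{m-1}$ if $\bv_1 = \zero$ (necessarily with $\bv_2 \neq \zero$), and $\wt(\bv H_{u,m}) = q^{m-1} - q^{u-1}$ if $\bv_1 \neq \zero$. Both values are strictly positive since $u < m$, so in both cases $\bv H_{u,m}$ is a nonzero codeword of the claimed weight, and no other nonzero weights occur.

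I do not anticipate a real obstacle: the argument is essentially a one-line weight accounting, and the only subtlety is verifying that the mapping $\bv \mapsto \bv H_{u,m}$ produces nonzero codewords in both subcases (guaranteed by the positivity of $q^{m-1}$ and $q^{m-1} - q^{u-1}$), so that both weights genuinely appear among the nonzero codewords.
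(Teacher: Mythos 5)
Your proof is correct and follows essentially the same route as the paper's: project a simplex codeword $\bv H_m$ (of constant weight $q^{m-1}$) onto the two column blocks, note that the block $\bv H^*_u$ has weight $0$ or $q^{u-1}$ because $H^*_u$ generates the simplex code of length $n_u$, and subtract. Your version is merely more explicit, in particular in checking that both weights actually occur, which the paper leaves implicit.
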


\begin{proof}
Clearly, $H^*_u$ generates the simplex code, i.e. the dual of the Hamming code, of length $n_u=(q^u-1)/(q-1)$. Hence any vector generated by $H^*_u$ has weight $0$ or $q^{u-1}$. Since any nonzero vector generated by $H_m$ has weight $q^{m-1}$, the result follows.
\end{proof}

\begin{proposition}\label{parB}
The code $B_{u,m}$ has parameters
$$
[n_B=(q^m-q^u)/(q-1),k=(q^m-q^u)/(q-1)-m,d;\rho=2]_q,\;\;\mbox{where }
$$
$$
d=\left\{\begin{array}{cl}
4 & \mbox{if } u=m-1,q=2;\\
3 & \mbox{otherwise.}
\end{array}\right.
$$
\end{proposition}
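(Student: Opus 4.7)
The plan is to verify the four claimed parameters $(n_B, k, d, \rho)$ in turn, relying on the block decomposition (\ref{form1}) of $H_m$ and on \Cref{pesos,params}. The length is immediate from (\ref{form1}): $H_{u,m}$ has $n_m - n_u = (q^m - q^u)/(q-1)$ columns. For the dimension I would show $H_{u,m}$ has full rank $m$. The key observation is that the columns of $H^*_u$ are exactly those columns of $H_m$ whose last $m - u$ coordinates vanish, so the columns of $H_{u,m}$ are, up to scalar, all vectors in $\F_q^m$ with nonzero projection onto the last $m - u$ coordinates. In particular the standard basis vectors $\mathbf{e}_{u+1}, \ldots, \mathbf{e}_m$ together with $\mathbf{e}_1 + \mathbf{e}_{u+1}, \ldots, \mathbf{e}_u + \mathbf{e}_{u+1}$ all appear (up to scalar) as columns of $H_{u,m}$ and span $\F_q^m$, hence $k = n_B - m$.

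For the minimum distance I first note that the columns of $H_{u,m}$ inherit pairwise linear independence from $H_m$, so $d \geq 3$ unconditionally. In the generic case I would exhibit three linearly dependent columns. Pick two columns $a = (a_1, a_2)^T$, $b = (b_1, b_2)^T$ of $H_{u,m}$ with $a_2, b_2 \in \F_q^{m-u}$ nonzero, and look for $\beta \in \F_q^*$ with $a_2 + \beta b_2 \neq 0$. Then $a + \beta b$ is a nonzero vector with nonzero last-$(m-u)$ projection, hence a scalar multiple of some third column $c$ of $H_{u,m}$, producing the dependence. The condition $a_2 + \beta b_2 = 0$ excludes at most one value of $\beta$, so for $q \geq 3$ a valid $\beta$ always exists. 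For $q = 2$ with $u < m - 1$, choose $a, b$ so that $a_2, b_2$ are distinct nonzero vectors in $\F_2^{m-u}$ (possible since $m - u \geq 2$); then $\beta = 1$ works. In the exceptional configuration $q = 2$, $u = m - 1$, every column of $H_{u,m}$ has last coordinate $1$, so any two columns sum to a vector with last coordinate $0$, which lies in the span of $H^*_u$ rather than $H_{u,m}$; no three columns are dependent, giving $d \geq 4$. An explicit weight-$4$ codeword then comes from the four columns $(\mathbf{0}, 1)^T, (\mathbf{e}_1, 1)^T, (\mathbf{e}_2, 1)^T, (\mathbf{e}_1 + \mathbf{e}_2, 1)^T$ of $H_{u,m}$, which sum to zero.

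Finally, for the covering radius, \Cref{pesos} shows that $B_{u,m}^\perp$ has exactly two nonzero weights, so the external distance of $B_{u,m}$ is $s = 2$. By \Cref{params}(i) this already gives $\rho \leq 2$. Since $d \geq 3$ the packing radius is $e = 1 \neq 2 = s$, so by \Cref{params}(ii) the code $B_{u,m}$ is not perfect, whence $\rho > 1$. Combining, $\rho = 2$.

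The main obstacle I expect is the minimum-distance case split, in particular the lower bound $d \geq 4$ for $q = 2$ and $u = m-1$, which requires ruling out every possible three-term dependence among columns of $H_{u,m}$; once that is settled the other three parameters follow cleanly from the explicit description of the columns of $H_{u,m}$ together with the external-distance machinery of \Cref{params}.
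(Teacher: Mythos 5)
Your proposal is correct and follows essentially the same route as the paper: length and dimension read off from the column decomposition (\ref{form1}), minimum distance via three-term (or four-term) linear dependencies among columns with the same case split on $q=2$, $u=m-1$, and covering radius from Lemma \ref{pesos} and Theorem \ref{params}. The only notable variations are that you verify the full rank of $H_{u,m}$ explicitly (which the paper merely asserts) and that in the exceptional case you prove $d=4$ directly by ruling out three-column dependencies and exhibiting a weight-$4$ codeword, whereas the paper identifies $B_{u,m}$ with the binary extended Hamming code of length $2^u$ and quotes its known minimum weight.
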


\begin{proof}
The length $n_B$ of $B_{u,m}$ is simply the length of ${\cal H}_m$ minus the number of columns of $H^*_u$. The dimension $k$ is the length of $B_{u,m}$ minus the number of rows of $H_{u,m}$ (or $H_m$).

Of course, $H_{u,m}$ has no scalar multiple columns, hence $d>2$. Given two columns $\bh_i$ and $\bh_j$ of $H_{u,m}$ we know that there is a column $\bh_\ell$ in $H_m$ which is linearly dependent with $\bh_i$ and $\bh_j$. If $u<m-1$ or $q>2$, we can choose $\bh_i$ and $\bh_j$ such that the last $m-u$ entries are linearly independent, then $\bh_\ell$ cannot be one of the first $n_u$ columns of $H_m$. Indeed, those columns have zeros in the last $m-u$ entries. Hence, $B_{u,m}$ contains codewords of weight $3$. For the case $u=m-1$ and $q=2$, the previous argument does not work since the last row of $H_{u,m}$ is the all-ones vector. Thus, $H_m$ can be written as:

\begin{equation}\label{form2}
H_m=\left[
\begin{array}{c|c|c}
H_{u} & H_{u} & \zero^T\\
\zero & \one & 1
\end{array}
\right].
\end{equation}
\
In fact, in this case, $B_{u,m}$ is the binary extended Hamming code of length $2^u$ and, therefore, it has minimum weight $4$.

Finally, since $B_{u,m}$ is not perfect, $\rho > e=1$ and, by Lemma \ref{pesos}, $B_{u,m}$ has external distance $s=2$, hence $\rho\leq 2$ by Theorem \ref{params}.
\end{proof}

\begin{lemma}\label{comptes}
The number of vectors at distance 1 and at distance 2 from $B_{u,m}$ are, respectively:
\begin{eqnarray*}
|B_{u,m}(1)| &=& q^{n_B-m}(q^m-q^u), \mbox{ and }\\
|B_{u,m}(2)| &=& q^{n_B-m}(q^u-1),
\end{eqnarray*}
where $n_B=(q^m - q^u)/(q-1)$ is the length of $B_{u,m}$.
\end{lemma}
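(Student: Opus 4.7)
The plan is to count the vectors in each subconstituent by partitioning $\F_q^{n_B}$ into cosets of $B_{u,m}$ and classifying each coset by the weight of its minimum-weight representative. Since $B_{u,m}$ is a linear code of length $n_B$ and dimension $n_B-m$, we have $|B_{u,m}(0)|=|B_{u,m}|=q^{n_B-m}$, and since $\rho=2$ by \Cref{parB}, the three subconstituents partition the ambient space:
\begin{equation*}
|B_{u,m}(0)|+|B_{u,m}(1)|+|B_{u,m}(2)|=q^{n_B}.
\end{equation*}
So once $|B_{u,m}(1)|$ is computed, $|B_{u,m}(2)|$ follows by subtraction.

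To compute $|B_{u,m}(1)|$, I would count cosets of minimum weight exactly $1$. A coset $\bx+B_{u,m}$ has minimum weight $1$ if and only if its syndrome $H_{u,m}\bx^T$ equals $\alpha\bh_i^T$ for some nonzero $\alpha\in\F_q$ and some column $\bh_i$ of $H_{u,m}$. The key observation is that $H_{u,m}$ is a submatrix of $H_m$, so no two of its columns are linearly dependent; consequently, the syndromes $\alpha\bh_i^T$ (for $\alpha\neq 0$ and $i=1,\ldots,n_B$) are pairwise distinct, giving $(q-1)n_B$ distinct syndromes. Each corresponding coset has $|B_{u,m}|=q^{n_B-m}$ elements, so
\begin{equation*}
|B_{u,m}(1)|=(q-1)\,n_B\,q^{n_B-m}=(q-1)\cdot\frac{q^m-q^u}{q-1}\cdot q^{n_B-m}=q^{n_B-m}(q^m-q^u).
\end{equation*}

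Then $|B_{u,m}(2)|=q^{n_B}-q^{n_B-m}-q^{n_B-m}(q^m-q^u)=q^{n_B-m}(q^m-1-q^m+q^u)=q^{n_B-m}(q^u-1)$, which is the desired value. The only step that requires a small argument beyond bookkeeping is the distinctness of the syndromes $\{\alpha\bh_i^T\}$, which is immediate from the defining property of $H_m$ (pairwise linearly independent columns) inherited by the submatrix $H_{u,m}$; I do not anticipate any genuine obstacle.
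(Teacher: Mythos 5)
Your proposal is correct and follows essentially the same route as the paper: the paper also counts the $(q-1)n_B$ weight-one coset representatives (phrased as each weight-one vector being at distance $1$ from exactly the zero codeword, rather than via distinct syndromes), multiplies by $|B_{u,m}|=q^{n_B-m}$, and then obtains $|B_{u,m}(2)|$ by subtraction using $\rho=2$. The two arguments are equivalent, so no further comment is needed.
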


\begin{proof}
The number of vectors of weight 1 is $(q-1)n_B$. All these vectors are at distance 1 from exactly one codeword (the zero vector). Thus, $|B_{u,m}(1)|=(q-1)n_B|B_{u,m}|=q^{n_B-m}(q^m-q^u)$.

Since the covering radius of $B_{u,m}$ is $\rho=2$, we have that
\begin{equation*}
\begin{split}
    |B_{u,m}(2)|=&|\F_q^{n_B}|-|B_{u,m}(1)|-|B_{u,m}|=\\&q^{n_B}-q^{n_B-m}(q^u-1)-q^{n_B-m}=q^{n_B-m}(q^u-1).
\end{split}
\end{equation*}
\end{proof}

\begin{corollary}
The code $B_{u,m}$ is quasi-perfect uniformly packed (hence completely regular) with intersection array:
$$
\IA=\{q^m-q^u, q^u-1;1,q^m-q^u\}.
$$
\end{corollary}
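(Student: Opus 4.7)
The plan is to first establish complete regularity via the external-distance route and then pin down the four intersection numbers $b_0,b_1,c_1,c_2$ using a minimum distance argument, one direct syndrome computation, and Lemma \ref{graph}.

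By Proposition \ref{parB}, $B_{u,m}$ has packing radius $e=1$ and covering radius $\rho=2$. Lemma \ref{pesos} shows that its dual has exactly two distinct nonzero weights, so the external distance is $s=2=e+1$. Theorem \ref{params}(iii) then yields the quasi-perfect uniformly packed property, and Proposition \ref{UPCR} gives complete regularity. Since $d\geq 3$ (again from Proposition \ref{parB}), one obtains $a_0=0$ and $c_1=1$, hence $b_0=(q-1)n_B=q^m-q^u$; and $b_2=0$ because $\rho=2$.

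The main obstacle is the computation of $c_2$, which I would handle by a direct syndrome analysis using the block structure $H_m=[\,H^*_u \mid H_{u,m}\,]$. The columns of $H^*_u$ are precisely the columns of $H_m$ whose last $m-u$ entries vanish, while each column of $H_{u,m}$ has at least one nonzero entry among its last $m-u$ positions (else it would be a scalar multiple of some column of $H^*_u$, contradicting the pairwise linear independence of columns of $H_m$). For any $\bx\in B_{u,m}(2)$, the syndrome $\bs=H_{u,m}\bx^T$ is nonzero and is not a scalar multiple of any column of $H_{u,m}$, hence (since $H_m$ represents every projective direction of $\F_q^m$) it must be a scalar multiple of a column of $H^*_u$; in particular, the last $m-u$ entries of $\bs$ vanish. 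For any neighbor $\by$ obtained from $\bx$ by changing a single coordinate $j$ by a nonzero amount $-\beta$, the syndrome $\bs-\beta\bh_j$ inherits a nonzero entry from $\bh_j$ in one of the last $m-u$ positions, so it cannot be a scalar multiple of a column of $H^*_u$; it must therefore be a nonzero scalar multiple of a column of $H_{u,m}$, which means $\by\in B_{u,m}(1)$. Thus every one of the $(q-1)n_B$ neighbors of $\bx$ lies in $B_{u,m}(1)$, giving $c_2=(q-1)n_B=q^m-q^u$.

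The last step is routine: Lemma \ref{graph} together with the cardinalities in Lemma \ref{comptes} give
\[
b_1 \;=\; \frac{c_2\,|B_{u,m}(2)|}{|B_{u,m}(1)|} \;=\; \frac{(q^m-q^u)(q^u-1)}{q^m-q^u} \;=\; q^u-1,
\]
completing the intersection array $\{q^m-q^u,\,q^u-1;\,1,\,q^m-q^u\}$.
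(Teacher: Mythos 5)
Your proof is correct. The first half (quasi-perfect uniformly packed via $s=e+1$ and Theorem \ref{params}(iii), then complete regularity via Proposition \ref{UPCR}, and the trivial values $b_0=(q-1)n_B$, $c_1=1$) is exactly the paper's argument. Where you diverge is in which nontrivial intersection number you compute directly: the paper counts $b_1$ by hand --- for a weight-one vector $\bx$ it counts the weight-two vectors covering $\bx$ that lie in $B_{u,m}(2)$, namely those completed to weight-three codewords of ${\cal H}_m$ whose third coordinate falls among the first $n_u$ positions, giving $b_1=(q-1)n_u=q^u-1$ --- and then derives $c_2$ from Lemma \ref{graph}. You instead compute $c_2$ directly by a syndrome argument (the syndrome of any $\bx\in B_{u,m}(2)$ must be a multiple of a column of $H^*_u$, hence supported on the first $u$ rows, and perturbing by any column of $H_{u,m}$ necessarily produces a nonzero entry in the last $m-u$ rows, so every neighbor lands in $B_{u,m}(1)$), and then derive $b_1$ from Lemma \ref{graph}. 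Your syndrome argument is sound, and the paper itself acknowledges this alternative in the remark immediately following the corollary ("It is not difficult to prove directly that given a vector $\bx\in B_{u,m}(2)$, any neighbor of $\bx$ must be in $B_{u,m}(1)$, obtaining the value of $c_2$"); you have supplied the details it omits. The two routes are of comparable length; yours has the small advantage of making $a_2=b_2=0$ explicit from the structure of $H_m$ rather than from $\rho=2$ alone, while the paper's makes the combinatorial meaning of $b_1$ more transparent.
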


\begin{proof}
Since $s=\rho=e+1$, $B_{u,m}$ is a quasi-perfect uniformly packed code, by Theorem \ref{params}. Since $d\geq 3$, it is clear that $b_0=(q-1)n_B=q^m-q^u$ and $c_1=1$. Given a vector $\bx$ of weight 1, the vectors $\by$ of weight 2 covering $\bx$ not at distance one from $B_{u,m}$ are those which are covered by codewords of ${\cal H}_m$ of weight 3, but not in $B_{u,m}$, hence with the third nonzero coordinate in the first $n_u$ positions. In other words, for $\bx$ we can choose anyone of these $n_u$ first positions and, for each of these positions, anyone of the $q-1$ multiples. Therefore $\bx$ is covered by $(q-1)n_u=q^u-1$ vectors of weight 2 at distance 2 from $B_{u,m}$. Thus, we obtain $b_1=q^u-1$.

By Lemma \ref{graph}, we know that $b_1|B_{u,m}(1)|=c_2|B_{u,m}(2)|$. Applying Lemma \ref{comptes}, we obtain:
$$
c_2=\frac{(q^u-1)q^{n_B-m}(q^m-q^u)}{q^{n_B-m}(q^u-1)}=q^m-q^u.
$$
\end{proof}

\begin{remark}
It is not difficult to prove directly that given a vector $\bx\in B_{u,m}(2)$, any neighbor of $\bx$ must be in $B_{u,m}(1)$, obtaining the value of $c_2$.
\end{remark}

Denote by $(\bx\mid \bx')=(x_1,\ldots,x_{n_u}\mid x'_{n_u+1},\ldots,x'_{n_m})$ a vector in $\F_q^{n_m}$ such that $\bx\in\F_q^{n_u}$ and $\bx'\in\F_q^{n_m-n_u}$. Let $e_j$ denote any one-weight vector with its nonzero coordinate at position $j$.

\begin{lemma}\label{cosets}
The number of cosets of $B_{u,m}$ of minimum weight $2$ is $q^u-1$. Moreover, for any vector $\bx'\in\F_q^{n_m-n_u}$ in one such coset, the vector $(\zero\mid \bx')$ is contained in a coset of weight $1$ of ${\cal H}_m$ with leader $e_j$, which has its nonzero coordinate at position $j\in\{1,\ldots,n_u\}$.
\end{lemma}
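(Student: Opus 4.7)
The plan breaks naturally into two independent parts matching the two assertions.

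For the cardinality claim, I would simply count cosets by weight. Since $B_{u,m}$ has dimension $n_B-m$, there are $q^m$ cosets in total. There is exactly one coset (the code itself) of weight $0$, and because $B_{u,m}$ has minimum distance at least $3$ (by \Cref{parB}), any two distinct weight-$1$ vectors differ by a weight-$1$ or weight-$2$ vector, which cannot lie in $B_{u,m}$; hence each weight-$1$ vector represents its own coset. There are $(q-1)n_B=q^m-q^u$ such vectors, giving $q^m-q^u$ cosets of weight $1$. Subtracting, the number of cosets of weight $2$ is $q^m-1-(q^m-q^u)=q^u-1$. (This matches $|B_{u,m}(2)|/|B_{u,m}|$ computed in \Cref{comptes}, which offers an alternative verification.)

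For the second assertion, the main idea is to compare syndromes with respect to $H_m$ and $H_{u,m}$ using the block decomposition (\ref{form1}). Given $\bx'$ in a coset of $B_{u,m}$ of weight $2$, the vector $(\zero\mid\bx')\in\F_q^{n_m}$ has syndrome
\[
H_m(\zero\mid\bx')^T \;=\; H^*_u\zero^T+H_{u,m}\bx'^T \;=\; H_{u,m}\bx'^T,
\]
which is nonzero (else $\bx'\in B_{u,m}$, contradicting weight $2$). Since $\mathcal{H}_m$ is perfect with covering radius $1$, the coset of $\mathcal{H}_m$ containing $(\zero\mid\bx')$ has a unique leader of weight $1$, say $e_j$ with $1\le j\le n_m$.

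The only thing left, and the one place where care is needed, is to show $j\le n_u$. This is a proof by contradiction: if $j>n_u$, then $e_j=(\zero\mid e'_{j-n_u})$ for some weight-$1$ vector $e'_{j-n_u}\in\F_q^{n_B}$ supported on the $B$-block. Then $(\zero\mid\bx')-e_j=(\zero\mid\bx'-e'_{j-n_u})\in\mathcal{H}_m$ forces $H_{u,m}(\bx'-e'_{j-n_u})^T=\zero^T$, so $\bx'-e'_{j-n_u}\in B_{u,m}$. But then $e'_{j-n_u}$ lies in the same coset of $B_{u,m}$ as $\bx'$, exhibiting a coset representative of weight $1$ and contradicting the hypothesis that the coset has minimum weight $2$. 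Hence $j\in\{1,\ldots,n_u\}$. I expect this last implication to be the only delicate step, essentially because it is the one that uses the block structure of $H_m$ in a nontrivial way; the rest is bookkeeping with syndromes and Hamming-code perfection.
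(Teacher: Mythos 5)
Your proof is correct and follows essentially the same route as the paper: the identical coset count for the first claim and, for the second, the observation that the ${\cal H}_m$-coset of $(\zero\mid\bx')$ has a weight-one leader whose nonzero coordinate must lie among the first $n_u$ positions. Your contradiction argument for $j\le n_u$ actually makes explicit a step the paper only asserts (that the weight-three codeword of ${\cal H}_m$ covering a weight-two representative cannot have its third nonzero coordinate in the $B$-block), so nothing is missing.
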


\begin{proof}
The total number of cosets of $B_{u,m}$ is $q^{n_B}/q^{n_B-m}=q^m$. Since there are one coset of minimum weight 0 (the code $B_{u,m}$) and $(q-1)n_B=q^m-q^u$ cosets of minimum weight 1, we obtain that the number of cosets of minimum weight 2 is $q^m-(q^m-q^u)-1=q^u-1$.

Since $d(\bx',B_{u,m})=2$, we have that there is some codeword $\bc'\in B_{u,m}$ such that $\by'=\bx'-\bc'$ has weight $2$. Hence, $(\zero\mid y')$ is covered by some codeword (of weight $3$) $(e_j\mid \by')\in {\cal H}_m$. Thus, $(\zero\mid \by')=(\zero\mid \bx' - \bc')\in {\cal H}_m-e_j$. Note that $(\zero\mid \bc')\in {\cal H}_m$. Then, $(\zero\mid \bx' - \bc') + (\zero\mid c')\in {\cal H}_m - e_j$, implying $(\zero\mid \bx')\in {\cal H}_m - e_j$.
\end{proof}

%

The matrix $H_m$ (\ref{form1}) can be written as:
\begin{equation}\label{form3}
H_m=\left[
\begin{array}{c|c|c|c|c}
H_u & H_u & \cdots & H_u & \zero_{u,n_{m-u}} \\
\hline
\zero_{m-u,n_u} & G_1 & \cdots & G_{q^{m-u}-1} & H_{m-u}
\end{array}
\right],
\end{equation}
\noindent where $\zero_{i,j}$ stands for the all-zero matrix of size $i\times j$ and $G_1,\ldots,G_{q^{m-u}-1}$ are $m-u \times n_u$ matrices, each one with identical nonzero columns and such that no two columns of distinct $G_i's$ are equal. To see that the matrix (\ref{form3}) is equivalent to the matrix (\ref{form1}), note that no two columns of the matrix (\ref{form3}) are linearly dependent. Therefore, the matrix (\ref{form3}) is a parity-check matrix for ${\cal H}_m$. Indeed the total number of columns is $q^{m-u}n_u + n_{m_u}=n_m$.

For $i=0,\ldots,q^{m-u}$, we call $i$-{\em block} of coordinate positions the set $\{in_u+1,\ldots,(i+1)n_u\}$. Thus, the first block, or $0$-block, corresponds to $\{1,\ldots,n_u\}$. For $i=1,\ldots,q^{m-u}-1$, the $i$-block corresponds to the set of coordinates of the matrix $G_i$. Finally, the last block, or $q^{m-u}$-block, corresponds to the coordinates of the matrix $H_{m-u}$.

\begin{lemma}\label{autos}
If $\alpha\in\Aut({\cal H}_u)$ (acting on the coordinates $\{1,\ldots,n_u\}$), then there exists $\beta\in\Aut(B_{u,m})$ (acting on the coordinates $\{n_u+1,\ldots,n_m\}$) such that $\gamma=(\alpha\mid \beta)\in \Aut({\cal H}_m)$.
\end{lemma}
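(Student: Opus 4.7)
The plan is to extend $\alpha$ from $\{1,\ldots,n_u\}$ to all of $\{1,\ldots,n_m\}$ by letting a single block-diagonal element of $\GL(m,q)$ act on $H_m$ written in the form (\ref{form3}). Using the identification $\MAut(\mathcal{H}_u)\cong\GL(u,q)$, I would first write the monomial part of $\alpha$ as a matrix $\tilde\alpha\in\GL(u,q)$ satisfying $\tilde\alpha H_u=H_u M_\alpha^T$, where $M_\alpha$ is the monomial matrix corresponding to $\alpha$. I would then set
\[
A=\begin{pmatrix}\tilde\alpha & 0\\ 0 & I_{m-u}\end{pmatrix}\in\GL(m,q)
\]
and use it to produce the required $\beta$.

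Next, I would compute $AH_m$ block-by-block on the form (\ref{form3}). On each of the $q^{m-u}$ top copies of $H_u$, left-multiplication by $A$ yields $\tilde\alpha H_u = H_u M_\alpha^T$, which permutes and rescales the columns within each block according to $M_\alpha$. On the bottom row, $A$ acts as the identity. Crucially, because every $G_i$ has all of its columns equal to a single vector, rearranging the $n_u$ columns inside the $i$-block by $M_\alpha$ leaves the $G_i$ part intact; the final block $H_{m-u}$ (whose top is zero) is untouched as well. Therefore $AH_m=H_m N^T$, where $N$ is a block-diagonal monomial matrix consisting of $M_\alpha$ on the first block and a monomial $M_\beta$ on the last $n_B$ coordinates that applies $M_\alpha$ to each middle block $\{in_u+1,\ldots,(i+1)n_u\}$ for $1\le i\le q^{m-u}-1$ and fixes the coordinates of $H_{m-u}$.

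From $AH_m=H_m N^T$ I would conclude $(M_\alpha,M_\beta)\in\MAut(\mathcal{H}_m)$; restricting the same identity to the last $n_B$ columns gives $AH_{u,m}=H_{u,m}M_\beta^T$, so $M_\beta\in\MAut(B_{u,m})$. For the Galois part of $\Aut$ when $q$ is a proper prime power, a field automorphism $\sigma\in\Aut(\mathcal{H}_u)$ already acts coordinatewise on all of $\F_q^{n_m}$; applying $\sigma$ to $H_m$ preserves the block decomposition of (\ref{form3}) (since $\sigma$ permutes the columns of $H_u$ and of $H_{m-u}$ up to scalar), so $\sigma\in\Aut(\mathcal{H}_m)\cap\Aut(B_{u,m})$ and one takes $\beta=\sigma$ on the last coordinates. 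The main subtlety is the uniformity built into (\ref{form3})---each $G_i$ has constant columns---which is precisely what forces a single monomial $M_\beta$ to be compatible with every middle block simultaneously; without it, the action induced by $\alpha$ on different blocks could demand different monomial matrices and no well-defined $\beta$ would exist.
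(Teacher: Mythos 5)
Your proposal is correct in substance and follows essentially the same route as the paper: the paper defines $\gamma$ directly on the coordinate side as $(\alpha\mid\alpha_1\mid\cdots\mid\alpha_{q^{m-u}-1}\mid id)$ acting blockwise on the form (\ref{form3}) and then invokes Lemma \ref{transdual}, whereas you realize the same extension on the $\GL(m,q)$ side via $A=\mathrm{diag}(\tilde\alpha,I_{m-u})$ and read off the monomial $N$ from $AH_m=H_mN^T$ --- which is, if anything, the more explicit and robust way to justify the paper's ``clearly.'' One sub-claim is inaccurate, though: for $q>2$, if $M_\alpha=DP$ with $D\neq I$, then the column $\left(\begin{smallmatrix}\bh_j\\ \bg_i\end{smallmatrix}\right)$ of the $i$-block maps under $A$ to $d\left(\begin{smallmatrix}\bh'\\ d^{-1}\bg_i\end{smallmatrix}\right)$, which is a scalar multiple of a column of the $i'$-block with $\bg_{i'}=d^{-1}\bg_i$; so $N$ does not in general ``apply $M_\alpha$ to each middle block'' and the $G_i$'s are not individually left intact. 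This does not damage the argument, because all the lemma needs is that $N$ preserves the partition $\{1,\ldots,n_u\}$ versus $\{n_u+1,\ldots,n_m\}$ setwise, and that much is guaranteed since $A$ sends columns whose last $m-u$ entries vanish exactly to columns with the same property; you should just replace the per-block claim by this coarser invariance (the paper's own proof is terse on the same point). Your separate treatment of the field-automorphism part of $\Aut$ is a welcome addition that the paper omits.
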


\begin{proof}
Given $\alpha\in\Aut({\cal H}_u^\perp)$, consider $\gamma=(\alpha\mid\alpha_1\mid\cdots\mid\alpha_{q^{m-u}-1}\mid id)$, where the action of each $\alpha_i$ is identical to the action of $\alpha$ but on the corresponding $i$-block of coordinate positions, and $id$ is the identity on the last block of coordinates. Clearly, $\gamma\in\Aut({\cal H}_m^\perp)$ and $\beta=(\alpha_1\mid\cdots\mid\alpha_{q^{m-u}-1}\mid id)\in\Aut(B_{u,m}^\perp)$.
By Lemma \ref{transdual}, the result follows.
\end{proof}

\begin{proposition}\label{transitiu}
The automorphism group $\Aut(B_{u,m})$ is transitive (on the set of one-weight vectors with coordinates in $\{n_u+1,\ldots,n_m\}$).
\end{proposition}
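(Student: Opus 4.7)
By \Cref{transdual} together with \Cref{transitius}, it suffices to prove that $\MAut(B_{u,m}^\perp)$ is transitive on the one-weight vectors of $\F_q^{n_B}$ supported in $\{n_u+1,\dots,n_m\}$. The plan is to produce such monomial automorphisms by restricting elements of $\MAut(\mathcal{H}_m^\perp)\cong\GL(m,q)$ that set-wise stabilize the $0$-block of coordinates. Here $\GL(m,q)$ acts on $\F_q^m$, hence permutes the columns of $H_m$ up to scalars, producing a monomial automorphism of $\mathcal{H}_m^\perp$.

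Since a column of $H_m$ lies in the $0$-block precisely when it belongs to $U=\F_q^u\times\{\zero\}^{m-u}$, the set-wise stabilizer $G\le\GL(m,q)$ of $U$ is the group of block upper-triangular matrices
\[
M=\begin{pmatrix} S_1 & S_3 \\ \zero & S_2 \end{pmatrix},\qquad S_1\in\GL(u,q),\ S_2\in\GL(m-u,q),\ S_3\in\F_q^{u\times(m-u)}.
\]
Each such $M$ permutes the $0$-block columns among themselves and the complement columns among themselves, so the induced monomial matrix on $\F_q^{n_m}$ is block-diagonal; its restriction to $\{n_u+1,\dots,n_m\}$ is therefore an element of $\MAut(B_{u,m}^\perp)$.

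The key computation is to verify that, for any two columns $h_j,h_k$ of $H_{u,m}$ and any prescribed $\nu\in\F_q^*$, some $M\in G$ satisfies $Mh_j=\nu h_k$. Writing $h_j=(\ba,\bb)^T$ and $h_k=(\bc,\bd)^T$ with $\bb,\bd\in\F_q^{m-u}\setminus\{\zero\}$ (nonzero because $h_j,h_k\notin U$), transitivity of $\GL(m-u,q)$ on nonzero vectors yields $S_2$ with $S_2\bb=\nu\bd$; setting $S_1=I$ reduces the top equation to $S_3\bb=\nu\bc-\ba$, which is solvable since $\bb\ne\zero$. By choosing $\nu$ so that the induced scaling aligns $\lambda\be_j$ with $\mu\be_k$, the resulting monomial automorphism of $B_{u,m}^\perp$ maps any prescribed one-weight vector to any other; transitivity of $\Aut(B_{u,m})$ on one-weight vectors then follows by \Cref{transdual} and \Cref{transitius}. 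The main (essentially only) technical point is this last linear-algebraic calculation, which is tailored to the block upper-triangular shape of $G$.
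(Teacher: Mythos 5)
Your proof is correct and follows essentially the same route as the paper: both work with the block upper-triangular subgroup of $\GL(m,q)$ stabilizing $U=\F_q^u\times\{\zero\}^{m-u}$, observe that it induces monomial automorphisms respecting the coordinate partition (hence automorphisms of $B_{u,m}$ via the dual), and then solve for the blocks so as to send one column of $H_{u,m}$ to a prescribed scalar multiple of another. The only difference is cosmetic: where the paper splits into three cases according to whether the projections $\bh_i^{(u)},\bh_j^{(u)}$ vanish, you solve the top equation $S_3\bb=\nu\bc-\ba$ uniformly using $\bb\neq\zero$, which is slightly cleaner.
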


\begin{proof}
Recall that the automorphism group of a Hamming code ${\cal H}_m$ is isomorphic to $\GL(m,q)$, which acts transitively on the set of one-weight vectors.

Consider the parity-check matrix of ${\cal H}_m$ given in $(\ref{form2})$.
Consider the $m\times m$ matrices $H_{K,M,N}$, where $K,M$ are $u\times u$, $(m-u)\times (m-u)$, nonsingular matrices, respectively, and $N$ is a $u\times (m-u)$ matrix.
$$
H_{K,M,N} = \begin{pmatrix}
K & N\\
0 & M
\end{pmatrix}.
$$

The matrices $H_{K,M,N}$ are in $\GL(m,q)$ and act on $H_m$ as monomial automorphisms, stabilising the Hamming code ${\cal H}_u$, so we can consider these matrices as automorphisms of $B_{u,m}$. Now, we want to show that these matrices assure the transitivity of $\Aut(B_{u,m})$. Take the $i$th and $j$th columns, say $\bh_i$ and $\bh_j$, respectively, where $i,j\in\{n_u+1,\ldots,n_m\}$. We want to find appropriate matrices $K,M,N$ such that $H_{K,M,N}(\bh_i)=\lambda\bh_j$, for any $\lambda\in\F_q$.

Take the projections of both $\bh_i$, $\bh_j$ on the first $u$ coordinates, say $\bh_i^{(u)}$ and $\bh_j^{(u)}$, respectively. And also let $\bh_i^{(m-u)}$ and $\bh_j^{(m-u)}$ be the respective projections on the last $m-u$ coordinates.

First of all, consider the case when $i$ and $j$ are not in the last block of coordinate positions, so that $\bh_i^{(u)}$ and $\bh_j^{(u)}$ are nonzero vectors. Now, take $N=0$, take the matrix $K$ such that $K(\bh_i^{(u)})=\lambda\bh_j^{(u)}$ and the matrix $M$ such that $M(\bh_i^{(m-u)})=\lambda\bh_j^{(m-u)}$. Indeed, we can do these last assignations since the matrix $K$ is in $\GL(u,q)$, the matrix $M$ is in $\GL(m-u,q)$ and the monomial automorphism group of a $q$-ary Hamming code is transitive on the set of one-weight vectors. Hence, we have $H_{K,M,N}(\bh_i)=\lambda\bh_j$.

Secondly, consider the case when $i$ and $j$ belong to the last block of coordinate positions. Then, $\bh_i^{(u)}$ and $\bh_j^{(u)}$ are the all-zeros vector.  Now, take $N=0$, any nonsingular matrix $K$ and the matrix $M$ such that $M(\bh_i^{(m-u)})=\lambda\bh_j^{(m-u)}$. Hence, we have $H_{K,M,N}(\bh_i)=\lambda\bh_j$.

Finally,  consider the case when $i$ is in the last block of coordinate positions and $j$ is not. In this case, $\bh_i^{(u)}$ is the all-zeros vector and $\bh_j^{(u)}$ is a nonzero vector. Now, take as matrix $K$ any nonsingular matrix and the matrix $M$ such that $M(\bh_i^{(m-u)})=\lambda\bh_j^{(m-u)}$. Let $\ell$ be anyone of the nonzero coordinates of $\bh_i^{(m-u)}$ and say $\gamma$ its value. Take the matrix $N$ with all columns equal to the
all-zeros vector, except the $\ell$th column which is $\lambda\gamma^{-1}\bh_j^{(u)}$. Hence, we have $H_{K,M,N}(\bh_i)=\lambda\bh_j$. For the inverse case, when $\bh_i^{(u)}$ is a nonzero vector and $\bh_j^{(u)}$ is the all-zeros vector, we can use the same argumentation and finally take the inverse matrix of $H_{K,M,N}$.
\end{proof}

\begin{remark}
In fact, Proposition \ref{transitiu} shows that the action of $\MAut(B_{u,m})$ on the set of one-weight vectors is transitive. As a consequence, see Remark \ref{transitius}, the full automorphism group $\Aut(B_{u,m})$ is also transitive.
\end{remark}


\begin{corollary}
The code $B_{u,m}$ is completely transitive.
\end{corollary}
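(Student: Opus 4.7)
The plan is to show that $\Aut(B_{u,m})$ has exactly $\rho+1 = 3$ orbits on the set of cosets of $B_{u,m}$. Since any automorphism preserves the minimum weight of a coset, and since the cosets of $B_{u,m}$ partition naturally by minimum weight into the three classes $B_{u,m}(0)$, $B_{u,m}(1)$, $B_{u,m}(2)$ (using $\rho=2$ from \Cref{parB}), it suffices to prove that $\Aut(B_{u,m})$ acts transitively on the set of cosets of each fixed minimum weight. The weight-$0$ class consists of the code itself, so it is a single orbit. The weight-$1$ class is exactly the set of cosets with a one-weight leader, and these leaders form a single orbit by \Cref{transitiu} (together with \Cref{transitius}).

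The main task is thus to show transitivity on the $q^u-1$ cosets of minimum weight $2$. The idea is to transport such cosets through the correspondence established in \Cref{cosets}: given a weight-$2$ leader $\bx'\in \F_q^{n_m-n_u}$, the extended vector $(\zero\mid \bx')$ sits in a coset of ${\cal H}_m$ whose unique weight-$1$ leader is supported on some position $j\in\{1,\ldots,n_u\}$, say with value $\lambda e_j^{(u)}$. This sets up a map from weight-$2$ cosets of $B_{u,m}$ to one-weight vectors on the first $n_u$ coordinates (up to the identification already in \Cref{cosets}).

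Now, given two weight-$2$ cosets of $B_{u,m}$ associated to one-weight vectors $\lambda_1 e_{j_1}^{(u)}$ and $\lambda_2 e_{j_2}^{(u)}$, transitivity of $\Aut({\cal H}_u)$ on one-weight vectors (which holds since it contains $\GL(u,q)$) yields an $\alpha \in \Aut({\cal H}_u)$ with $\alpha(\lambda_1 e_{j_1}^{(u)}) = \lambda_2 e_{j_2}^{(u)}$. By \Cref{autos}, $\alpha$ extends to $\gamma = (\alpha\mid\beta)\in \Aut({\cal H}_m)$ with $\beta\in \Aut(B_{u,m})$. Applying $\gamma$ to $(e_{j_1}^{(u)}\mid \bx')\in {\cal H}_m$ (times $\lambda_1$, appropriately), we deduce that $(\zero \mid \beta(\bx'))$ lies in the coset ${\cal H}_m - \lambda_2 e_{j_2}^{(u)}$, so by \Cref{cosets}, $\beta(\bx')$ is a leader of the second weight-$2$ coset. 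Hence $\beta$ sends the first weight-$2$ coset to the second, giving the required transitivity.

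The main obstacle I anticipate is the bookkeeping in the last step: one must check carefully that the extension $\beta$ produced by \Cref{autos} really sends a weight-$2$ coset leader of $B_{u,m}$ to a weight-$2$ coset leader indexed by the image under $\alpha$. Everything else is routine invocation of earlier results; the correspondence in \Cref{cosets} is what makes the reduction to $\Aut({\cal H}_u)$-transitivity on weight-$1$ vectors work.
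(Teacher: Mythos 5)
Your proposal is correct and follows essentially the same route as the paper's proof: weight-one cosets form a single orbit by \Cref{transitiu}, and weight-two cosets are handled via the correspondence of \Cref{cosets} together with the extension of automorphisms of ${\cal H}_u$ provided by \Cref{autos} and the transitivity of $\Aut({\cal H}_u)$ on one-weight vectors. The ``bookkeeping'' you flag at the end (that $\beta$ really maps one weight-two coset onto the other) is resolved in the paper exactly as you suggest, by noting that two weight-two leaders whose extensions lie in the same weight-one coset of ${\cal H}_m$ must lie in the same coset of $B_{u,m}$.
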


\begin{proof}
By \Cref{parB}, $\rho(B_{u,m})=2$. Hence, we have to see that the cosets of weight $i$ are in the same orbit, for $i=1$ and $i=2$.

Since $\Aut(B_{u,m})$ is transitive by Proposition \ref{transitiu},
we have that all the cosets of $B_{u,m}$ with minimum weight one are in the same orbit.

By Lemma \ref{autos}, it follows that $\Aut({\cal H}_u)=\GL(u,q)$ acting on the first $n_u$ coordinates is contained in $\Aut({\cal H}_m)=\GL(m,q)$, acting on the full set of $n_m$ coordinate positions. Let $B_{u,m}+\bx$ and $B_{u,m}+\by$ be two cosets of minimum weight 2, where we assume that $\bx$ and $\by$ have weight two. Let ${\cal H}_m +e_i$ and ${\cal H}_m + e_j$, with $i,j\in\{1,\ldots,n_u\}$, be the corresponding cosets of ${\cal H}_m$, according to Lemma \ref{cosets}. Since $\Aut({\cal H}_u)=\GL(u,q)$ is transitive, and by Lemma \ref{autos}, there is an automorphism $\gamma\in\Aut({\cal H}_m)$ fixing setwise the first $n_u$ coordinates (and the last $n_m-n_u$) such that $\gamma({\cal H}_m +e_i)={\cal H}_m + e_j$. By Lemma \ref{autos}, it is clear that the action of $\gamma$ in the last $n_m-n_u$ coordinates sends $B_{u,m}+\bx$ to $B_{u,m}+\by$. Indeed, if $\gamma(B_{u,m}+\bx)=B_{u,m}+\bz$, for some $\bz$ of weight two, then $e_j+\by$ and $e_j+\bz$ are codewords in ${\cal H}_m$. Thus, $\by$ and $\bz$ are in the same coset. Therefore, all the cosets of $B_{u,m}$ of weight two are in the same orbit.
\end{proof}

\subsection{The case $\rho(A)=2$}

For this case, we have the following result.

\begin{theorem}\label{rho2}
If the code $A$ has dimension $n_A-m$ and is completely regular with $\rho(A)=2$, then the supplementary code $B$, of length $n_B$, is completely regular with $\rho(B)\leq 2$.
\end{theorem}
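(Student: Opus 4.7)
The plan is to reduce the problem to controlling the external distance $s(B)$ and then invoke \Cref{params}. Since $A$ is completely regular with $\rho(A)=2$, part (iv) of \Cref{params} gives $s(A)=2$, so $A^\perp$ has exactly two nonzero weights, which I will call $w_1$ and $w_2$.

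The key observation is a weight identity coming from the simplex code. For every $\bv\in\F_q^m\setminus\{\zero\}$, the row vector $\bv H_m$ is a nonzero codeword of ${\cal H}_m^\perp$ and therefore has weight $q^{m-1}$. After reordering columns, $H_m=[H_A\mid H_B]$, so
\[
\wt(\bv H_A)+\wt(\bv H_B)=q^{m-1}\quad\text{for all }\bv\neq\zero.
\]
The hypothesis $\dim A=n_A-m$ forces $\rank(H_A)=m$, so the map $\bv\mapsto \bv H_A$ is injective. Consequently $\wt(\bv H_A)\in\{w_1,w_2\}$ for every nonzero $\bv$, and therefore $\wt(\bv H_B)\in\{q^{m-1}-w_1,\,q^{m-1}-w_2\}$. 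The distinct nonzero weights of $B^\perp$ form a subset of these two values, giving $s(B)\leq 2$. By \Cref{params}(i), $\rho(B)\leq 2$.

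To conclude that $B$ is completely regular, I would split into cases according to $\rho(B)$. Since the columns of $H_B$ are nonzero and pairwise linearly independent (inherited from $H_m$), the code $B$ has minimum distance at least $3$, so $e(B)\geq 1$ and also $\rho(B)\geq 1$. If $\rho(B)=1$ then $e(B)=\rho(B)$, so $B$ is perfect (a Hamming code) and hence completely regular. If $\rho(B)=2$ with $e(B)=2$, the code $B$ is again perfect, and thus completely regular. In the remaining case $\rho(B)=2$ and $e(B)=1$, the value $s(B)=1$ is impossible: by \Cref{params}(ii) it would force $e(B)=s(B)$, hence $e(B)=\rho(B)$, contradicting $\rho(B)=2$. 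Thus $s(B)=2=e(B)+1$, and \Cref{params}(iii) makes $B$ quasi-perfect uniformly packed, so \Cref{UPCR} yields complete regularity.

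The main obstacle is really the opening identity and its consequence for $s(B)$: the injectivity of $\bv\mapsto\bv H_A$, which is what allows the two-weight information on $A^\perp$ to be transferred into the bound $s(B)\leq 2$, relies essentially on the dimension hypothesis $\dim A=n_A-m$. Once this is in place, the remaining case analysis is a routine application of the classical characterizations recorded in \Cref{params}.
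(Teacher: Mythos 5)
Your proposal is correct and follows essentially the same route as the paper: it derives $s(A)=2$ from complete regularity, uses the constant weight $q^{m-1}$ of the simplex code ${\cal H}_m^\perp$ together with the injectivity of $\bv\mapsto\bv H_A$ (forced by $\dim A=n_A-m$) to bound $s(B)\leq 2$, and then concludes via the characterizations in \Cref{params} and \Cref{UPCR}. The only difference is cosmetic: you organize the final case analysis by $\rho(B)$ and $e(B)$ (explicitly covering the $e(B)=2$ perfect case), whereas the paper organizes it by $s(B)$ and rules out $s(B)=2,\rho(B)=1$ by examining small lengths $n_B$; both are routine applications of the same facts.
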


\begin{proof}
If $A$ is completely regular with $\rho(A)=2$ then, by \Cref{params}, the external distance of $A$ is $s(A)=2$. Hence, $A^\perp$ has two nonzero weights, say $w_1$ and $w_2$. Consider any nonzero vector $\bz=(\bx\mid\by)\in {\cal H}^\perp_m$, where $\bx\in A^\perp$ and $\by\in B^\perp$. Since $\bz$ is a nonzero codeword of the simplex code of length $n_m$, we know that the weight of $\bz$ is $\w(\bz)=q^{m-1}$. Also, $\w(\bz)=\w(\bx)+\w(\by)$ and thus we obtain that $\w(\by)=q^{m-1}-w_1$ or $\w(\by)=q^{m-1}-w_2$. Note that $\bx$ cannot be the zero vector because the dimension of $A^\perp$ is $m$. We conclude that $B^\perp$ has at most two nonzero weights (if $w_1$ or $w_2$ equals $q^{m-1}$, then $B^\perp$ has only one nonzero weight). Therefore $s(B)\leq 2$, implying $\rho(B)\leq 2$, by \Cref{params}.

If $s(B)=1$, then $B$ is the trivial code of length 1, $B=\{(0)\}$, or $B$ is a Hamming code, by \Cref{params}. In any case, $B$ is completely regular. In fact, if $s(B)=1$, we are in the situation of \Cref{subseccio1}, interchanging the roles of $A$ and $B$.

If $s(B)=2$ and $\rho(B)=2$, then $B$ is a quasi-perfect uniformly packed code, by \Cref{params}. Therefore, $B$ is completely regular by \Cref{UPCR}.

Finally, note that $s(B)=2$ and $\rho(B)=1$ is not possible:
\begin{itemize}
\item[(i)] If $n_B=1$, then $s(B)$ cannot be $2$.
\item[(ii)] If $n_B=2$, then $B=\{(0,0)\}$, which has $\rho(B)=2$.
\item[(iii)] If $n_B\geq 3$, then $B$ has packing radius $e\geq 1$. Since $e\leq \rho(B)$, if we assume $\rho(B)=1$, then we have $e=\rho(B)< s(B)$ contradicting \Cref{params}.
\end{itemize}

\end{proof}

\begin{remark}
If the length of $A$ verifies $n_A>n_{m-1}$, then the zero vector cannot be a row of the parity-check matrix of $A$, otherwise $H_m$ would have two linearly dependent columns. Hence, the zero vector could not be generated by the rows of the parity-check matrix of $A$ and, as a consequence, the dimension of $A^\perp$ would be $m$. Therefore, the condition $n_A>n_{m-1}$ implies that the dimension of $A$ is $n_A-m$. Note that the converse statement is not true (see the next example).
\end{remark}

\begin{example}\label{GolayTernari}
Let $A$ be the ternary Golay $[11,6,5;2]_3$ code. Consider the ternary matrix $H_5$, which is the parity-check matrix of a ternary Hamming $[121,116,3;1]_3$ code. Let $B$ be the supplementary code which has length $n_B=110$.

Since $A$ is perfect (so completely regular) with covering radius $\rho(A)=2$, we have that $B$ is a completely regular code. Clearly, $B$ is not perfect, thus $\rho(B)=2$. Therefore, the parameters of $B$ are $[110,105,3;2]_3$. Moreover, we have computationally verified that $B$ is completely transitive and with intersection array
$$
\IA=\{220,20;1,200\}.
$$
\end{example}

Note that the hypothesis about the dimension of $A$ in \Cref{rho2} cannot be relaxed, as the next example shows.

\begin{example}\label{GolayTernari2}
Let $A$ be the punctured ternary Golay $[10,6,4;2]_3$ code. As in \Cref{GolayTernari}, consider $H_5$, the parity-check matrix of a ternary Hamming $[121,116,3;1]_3$ code. Now, let $B$ be the supplementary code with length $n_B=111$. In this case,  the dimension of $A$ is $6\neq n_A-m=5$. 

The code $A$ is completely regular and completely transitive with intersection array
$$
\IA=\{20,18;1,6\}.
$$
The code $B$ has parameters $[111,106,3;2]_3$ and it is not completely regular since its external distance is $s(B)=4$.
\end{example}

\begin{remark}
The construction described in \Cref{rho2} does not work for covering radius $\rho(A)=3$. For example, let $A$ be the extended ternary Golay code and consider the ternary matrix $H_6$, which is the parity-check matrix of a ternary Hamming $[364,258,3;1]_3$ code. Let $B$ be the supplementary code.

The code $A$ is completely transitive with $\rho(A)=3$. The code $B$ has parameters $[352,346,3;2]_3$ and it is not completely regular since its external distance is $s(B)=3$.
\end{remark}

We also give the expressions of the intersection numbers of $A$ and $B$ in terms of the lengths $n_A$ and $n_B$ and the parameter $b_1$.

\begin{corollary}\label{IAs}
Assume that the code $A$ is completely regular with dimension $n_A-m$, covering radius $\rho(A)=2$, and the supplementary code $B$ has also covering radius $\rho(B)=2$.
\begin{itemize}
    \item[(i)] The code $B$ is completely regular with dimension $n_B-m$.
    \item[(ii)] The code $A$ has intersection array
    $$
    \IA(A)=\{b_0,b_1;c_1,c_2\}=\{(q-1)n_A,b_1;1,\frac{n_A}{n_B}b_1\}.
    $$
    \item[(iii)] The code $B$ has intersection array
    $$
    \IA(B)=\{b_0',b_1';c_1',c_2'\}=\{(q-1)n_B,(q-1)n_A-\frac{n_A}{n_B}b_1;1,(q-1)n_B-b_1\}.
    $$
\end{itemize}
\end{corollary}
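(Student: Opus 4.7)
For part~(i), \Cref{rho2} already gives that $B$ is completely regular with $\rho(B)\le 2$. Combined with the hypothesis $\rho(B)=2$, \Cref{params} forces $s(B)=e(B)+1=2$, so $B$ is quasi-perfect uniformly packed and $B^\perp$ has exactly two distinct nonzero weights. To conclude $\dim B = n_B-m$ I would argue that $H_B$ has full row rank~$m$: if instead some nonzero $\lambda\in\F_q^m$ satisfied $\lambda H_B=\zero$, then $\lambda H_m=(\lambda H_A\mid\zero)$ would be a nonzero codeword of the simplex code of length $n_m$, so $\lambda H_A\in A^\perp$ would have weight $q^{m-1}$. The proof of \Cref{rho2} exhibits the nonzero weights of $B^\perp$ inside $\{q^{m-1}-w_1,\,q^{m-1}-w_2\}$, where $w_1,w_2$ are the two nonzero weights of $A^\perp$; so $q^{m-1}\in\{w_1,w_2\}$ would collapse $B^\perp$ to at most one nonzero weight, contradicting $s(B)=2$.

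The ``routine'' intersection numbers for both codes are then immediate: minimum distance at least~$3$ gives $b_0(A)=(q-1)n_A$ and $c_1(A)=1$, and analogously for $B$. Disjointness of the radius-$1$ balls about codewords of $A$ gives $|A(1)|=(q-1)n_A|A|$; inserting $|A|=q^{n_A-m}$ and the identity $q^m=1+(q-1)(n_A+n_B)$ one computes $|A(2)|=(q-1)n_B|A|$. \Cref{graph} then delivers $c_2(A)=(n_A/n_B)\,b_1(A)$, and the same argument for $B$ (whose dimension is $n_B-m$ by~(i)) yields $b_1(B)=(n_A/n_B)\,c_2(B)$.

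The remaining key identity is $b_1(A)+c_2(B)=(q-1)n_B$, after which parts~(ii) and~(iii) follow by substitution. My plan is a projective-line count. Fix any column $\bh_{A,j}$ of $H_A$ and let $\bx\in A(1)$ have syndrome $\lambda\bh_{A,j}$; the neighbour $\bx-\mu e_k$ has syndrome $\lambda\bh_{A,j}-\mu\bh_{A,k}$, and by the defining property of $H_m$ every nonzero vector of $\F_q^m$ is a unique scalar multiple of a column of $H_m$. For each projective line $\ell$ through $\bh_{A,j}$ let $a_\ell$ and $b_\ell$ denote the numbers of columns of $H_A\setminus\{\bh_{A,j}\}$ and of $H_B$, respectively, lying on $\ell$; then $a_\ell+b_\ell=q$. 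Careful bookkeeping shows $b_1(A)=\sum_\ell a_\ell b_\ell$ (a neighbour lies in $A(2)$ precisely when its syndrome falls on a $B$-column), and the parallel analysis of $\by\in B(2)$ with syndrome $\lambda\bh_{A,j}$ yields $c_2(B)=\sum_\ell b_\ell(b_\ell-1)$. Summing,
$$
b_1(A)+c_2(B)=\sum_\ell b_\ell(a_\ell+b_\ell-1)=(q-1)\sum_\ell b_\ell=(q-1)n_B,
$$
since each $B$-column lies on a unique projective line through $\bh_{A,j}$. The main obstacle is this translation step: correctly identifying both intersection numbers as the bilinear sums $\sum_\ell a_\ell b_\ell$ and $\sum_\ell b_\ell(b_\ell-1)$, with the slightly asymmetric exclusion of endpoints in the two cases.
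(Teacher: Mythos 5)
Your argument is correct, and for the one non-routine step it takes a genuinely different (though equivalent) route. Part (i) is essentially the paper's argument: both proofs observe that a dependency among the rows of $H_B$ would force a weight-$q^{m-1}$ word into $A^\perp$, and both derive a contradiction from the two-weight conditions on the duals (the paper makes $A^\perp$ have three nonzero weights; you make $B^\perp$ collapse to one — either works). The routine values $b_0,b_0',c_1,c_1'$ and the two relations $c_2=(n_A/n_B)b_1$, $b_1'=(n_A/n_B)c_2'$ from \Cref{graph} coincide with the paper's. The difference is the third relation. The paper proves $b_1n_A+b_1'n_B=(q-1)n_An_B$ by a \emph{global} double count: it takes all $(q-1)^2n_An_B$ weight-two vectors with one nonzero coordinate in $J_A$ and one in $J_B$, covers each by its unique weight-three codeword of ${\cal H}_m$, and sorts them according to whether the third support position lands in $J_A$ or $J_B$, thereby matching them bijectively with the edges of two bipartite neighbour graphs. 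You instead prove the (equivalent, via the two ratio relations) identity $b_1+c_2'=(q-1)n_B$ by a \emph{local} count of the projective lines through a fixed $A$-point $\bh_{A,j}$, identifying $b_1=\sum_\ell a_\ell b_\ell$ and $c_2'=\sum_\ell b_\ell(b_\ell-1)$ and using $a_\ell+b_\ell=q$. Your identifications are sound: a neighbour $\lambda e_j+\mu e_k$ of $\lambda e_j$ has syndrome on the line through $\bh_{A,j}$ and $\bh_{A,k}$ and sweeps out the $q-1$ points of that line other than its two generators, which gives exactly the asymmetric exclusions you flag ($b_\ell$ admissible targets in the first count, $b_\ell-1$ in the second). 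What the paper's version buys is that it never needs to fix a reference column and so makes the symmetry between $A$ and $B$ manifest; what yours buys is a more geometric picture (the line distribution through a point) that directly exposes \emph{why} the complete regularity hypothesis is needed, namely that $\sum_\ell a_\ell b_\ell$ must be independent of the chosen column $\bh_{A,j}$.
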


\begin{proof}
For (i), we already know that $B$ is completely regular, by \Cref{rho2}. Assume that the dimension of $B$ is less than $n_B-m$. Hence, the parity-check matrix of $B$ can be written containing at least one zero row. Since $s(B)=\rho(B)=2$, the dual code $B^\perp$ contains two nonzero weights, say $w_1$ and $w_2$. But, in this case, $A^\perp$ would contain three nonzero weights: $q^{m-1}$, $q^{m-1}-w_1$ and $q^{m-1}-w_2$; leading to a contradiction because $A$ has external distance $s(A)=2$.

For (ii) and (iii), with similar computations as in \Cref{comptes}, we have:
\begin{eqnarray*}
|A(1)| &=& (q-1)n_A|A|,\\
|A(2)| &=& q^{n_A}-(q-1)n_A|A|-|A|.
\end{eqnarray*}
By \Cref{graph}, we obtain
$$
b_1(q-1)n_A|A|=c_2\left(q^{n_A}-\left((q-1)n_A+1\right)|A|\right).
$$
Taking into account that $|A|=q^{n_A-m}$ and $n_B=n_m-n_A$, the expression simplifies to
\begin{equation}\label{primera}
b_1n_A=c_2n_B.
\end{equation}
By (i), we have that $|B|=q^{n_B-m}$, thus we symmetrically obtain
\begin{equation}\label{segona}
b_1'n_B=c_2'n_A.
\end{equation}

Let $J_A$ (respectively $J_B$) be the set of $n_A$ (resp. $n_B$) coordinate positions corresponding to the code $A$ (resp. $B$). Define $X_A$ (resp, $X_B$) as the set of one-weight vectors with coordinates in $J_A$ (resp. $J_B$). Define also $Y_A$ (resp. $Y_B)$ as the set of two-weight vectors in $A(2)$ (resp. $B(2)$) with coordinates in $J_A$ (resp. $J_B$). Consider the bipartite graph $\Gamma_A$ (resp. $\Gamma_B$) with vertex set $X_A\cup Y_A$ (resp. $X_B\cup Y_B$) and edges joining pairs of vertices $\bx$, $\by$, where $\bx\in X_A$, $\by\in Y_A$ (resp. $\bx\in X_B$, $\by\in Y_B$), if $d(\bx,\by)=1$.

The degree of any vertex in $X_A$ (resp. $X_B$) is $b_1$ (resp. $b'_1)$ by definition. Hence, the total number of edges in $\Gamma_A$ (resp. $\Gamma_B$) is $b_1(q-1)n_A$ (resp. $b'_1(q-1)n_B$).

Consider the set of all two-weight vectors with one nonzero coordinate in $J_A$ and one nonzero coordinate in $J_B$. There are $(q-1)^2n_An_B$ such vectors. If $e_i+e_j$ is one of these vectors ($i\in J_A$, $j\in J_B$), then there is exactly one codeword $\bz\in {\cal H}_m$ of weight three which covers $e_i+e_j$, say $\bz=e_i+e_j+e_k$. If $k\in J_A$, then $e_i+e_k\in A(2)$ and $e_i+e_k$ is a neighbor of $e_i$. Else, if $k\in J_B$, then $e_j+e_k\in B(2)$ and $e_j+e_k$ is a neighbor of $e_j$. Therefore, the vector $e_i+e_j$
induces either one edge of $\Gamma_A$, or one edge of $\Gamma_B$.

We conclude that 
$$
b_1n_A(q-1)+b_1'n_B(q-1)=(q-1)^2n_An_B,
$$
which simplifies to
\begin{equation}\label{tercera}
b_1n_A+b_1'n_B=(q-1)n_An_B.
\end{equation}

The values $b_0=(q-1)n_A$, $b'_0=(q-1)n_B$, $c_1=c_1'=1$ are trivial since $A$ and $B$ have minimum distance at least three. From \Cref{primera}, we obtain $c_2=n_Ab_1/n_B$. Using \Cref{tercera}, we compute $b_1'=(q-1)n_A-n_Ab_1/n_B$, and by \Cref{segona}, $c_2'=(q-1)n_B-b_1$.
\end{proof}

Finally, we will also show that the codes $A$ and $B$, under the hypothesis of \Cref{IAs}, are both completely transitive or both are not.

\begin{proposition}\label{subgroup}
Assume that the code $A$ has dimension $n_A-m$, covering radius $\rho(A)=2$, and let $B$ be the supplementary code. Then $\Aut(A)$ is a subgroup of $\Aut(B)$.
\end{proposition}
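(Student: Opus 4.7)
The plan is to construct, for each $\alpha\in\Aut(A)$, a companion automorphism $\beta\in\Aut(B)$ such that the combined permutation $\gamma=(\alpha\mid\beta)$ lies in $\Aut({\cal H}_m)$, and then to argue that the assignment $\alpha\mapsto\beta$ is an injective group homomorphism. The bridge will be a uniquely determined matrix $T\in\GL(m,q)$ that encodes $\alpha$ on the dual side and acts uniformly on the columns of $H_m$.

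First, given $\alpha\in\Aut(A)$ with monomial matrix $M_\alpha$, Lemma~\ref{transdual} yields a dual monomial matrix $M_{\alpha'}$ defining an automorphism of $A^\perp$. Since $\dim A=n_A-m$ by hypothesis, the $m$ rows of $H_A$ are linearly independent and form a basis of $A^\perp$, so the action of $\alpha'$ expressed in this basis is encoded by a unique $T\in\GL(m,q)$ satisfying $H_A\cdot M_{\alpha'}=T\cdot H_A$.

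Next, I propagate $T$ across the whole parity-check matrix. The matrix $TH_m=[TH_A\mid TH_B]$ is still a parity-check matrix for ${\cal H}_m$ (its null space is unchanged by left multiplication by the invertible $T$), so its columns are pairwise non-parallel representatives of all $n_m$ lines through the origin of $\F_q^m$. The first $n_A$ columns of $TH_m$ are, up to scalars and reordering, the columns of $H_A$; hence no column of $TH_B$ can be parallel to a column of $H_A$, and since the $n_m$ columns of $H_m$ exhaust every line in $\F_q^m$, each column of $TH_B$ must be parallel to a unique column of $H_B$. This yields a monomial matrix $M_{\beta'}$ on the positions of $J_B$ satisfying $TH_B=H_B M_{\beta'}$, giving an automorphism $\beta'$ of $B^\perp$; applying Lemma~\ref{transdual} once more produces $\beta\in\Aut(B)$. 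Set $\Psi(\alpha):=\beta$.

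Finally, verify that $\Psi$ is a group homomorphism: uniqueness of $T$ turns composition $\alpha_1\alpha_2$ into the product of the associated $T$'s, which in turn yields composition $\beta_1\beta_2$ on the $B$ side. The main obstacle is injectivity. If $\Psi(\alpha)=\mathrm{id}_B$, then $T$ must fix every column of $H_B$ pointwise, and to conclude $T=I$ (and hence $\alpha=\mathrm{id}_A$) one needs the columns of $H_B$ to span all of $\F_q^m$, i.e., $\dim B=n_B-m$. This span property can be extracted from the standing hypotheses by a weight-matching argument in ${\cal H}_m^\perp$: any nonzero left kernel vector $\bu$ of $H_B$ would give $(\bu H_A\mid\zero)\in{\cal H}_m^\perp$, a weight-$q^{m-1}$ simplex codeword supported entirely on $J_A$, which is incompatible with the two-weight structure forced on $A^\perp$ by $\rho(A)=s(A)=2$.
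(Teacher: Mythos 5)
Your construction of the homomorphism $\Psi$ is in substance the same as the paper's: the paper takes $\phi\in\Aut(A)$, observes that its action on the columns of $H_A$ is linear, extends it by linearity to a map of $\F_q^m$ (your matrix $T$), and restricts to the columns of $H_B$. Your dual-side formulation via $H_AM_{\alpha'}=TH_A$ and $TH_B=H_BM_{\beta'}$ is a cleaner way to say the same thing, and that part is fine.

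The gap is in your injectivity argument, and it is a genuine one. First, the ``two-weight structure forced on $A^\perp$ by $\rho(A)=s(A)=2$'' is not available: the proposition assumes only $\dim A=n_A-m$ and $\rho(A)=2$, not that $A$ is completely regular, so you only know $s(A)\geq\rho(A)=2$ and $A^\perp$ may have more than two weights. Second, and more seriously, even when $A^\perp$ genuinely has exactly two nonzero weights, nothing prevents one of them from being $q^{m-1}$ --- the paper itself allows for this in the proof of \Cref{rho2} (``if $w_1$ or $w_2$ equals $q^{m-1}$, then $B^\perp$ has only one nonzero weight''). So a simplex codeword $(\bu H_A\mid\zero)$ of weight $q^{m-1}$ supported on $J_A$ produces no contradiction, and the columns of $H_B$ need not span $\F_q^m$. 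A concrete instance is $q=2$, $A=B_{m-1,m}$ the extended Hamming code of length $2^{m-1}$ (which satisfies $\dim A=n_A-m$ and $\rho(A)=2$): there $A^\perp$ is two-weight with weights $2^{m-2}$ and $2^{m-1}$, the supplementary code is $B={\cal H}_{m-1}$ with $\dim B=n_B-(m-1)>n_B-m$, and your $\Psi$ has a nontrivial kernel (the translations in $\Aut(A)\cong\mathrm{AGL}(m-1,2)$, which act trivially on the hyperplane of columns of $H_B$). To be fair, you have located a real soft spot: the paper's own proof disposes of injectivity with ``it is clear that $\phi^{(e)}_B\neq\psi^{(e)}_B$,'' which relies on exactly the span condition you identified; the condition $\dim B=n_B-m$ is only established later, in \Cref{IAs}, under the additional hypotheses that $A$ is completely regular and $\rho(B)=2$. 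But as written, your weight-matching step does not close the gap under the stated hypotheses.
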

\begin{proof}
Let $\phi\in \Aut(A)$. Let $H_A$ (respectively $H_B$) be the parity-check matrix of $A$ (resp. $B$). Note that since the dimension of $A$ is $n_A-m$, the minimum distance of $A$ is not less than three. Let $\bh_1,\ldots, \bh_s$ be a set of $s$ columns in $H_A$ such that $\sum_{i=1}^{s} \alpha_i \bh_i=0$, where $\alpha_i\in \F_q$. Hence, we are assuming that the columns  $\bh_1,\ldots, \bh_s$ are the support of a codeword in $A$. Since $\phi$ is an automorphism of $A$ we should have $\sum_{i=1}^{s} \alpha_i \phi(\bh_i)=0$ and so the action of $\phi$ is linear over the columns in $H_A$. Since the dimension of $A$ is $n_A-m$ we can extend, by linearity, the action of $\phi$ over all columns in $H_m$ obtaining $\phi^{(e)}\in \Aut({\cal H}_m)$. The projection of $\phi^{(e)}$ over the columns of $H_B$ gives $\phi^{(e)}_B\in \Aut(B)$. It is clear that if $\phi,\psi \in \Aut(A)$ with $\phi\not= \psi$, then $\phi^{(e)}_B \not= \psi^{(e)}_B$.
\end{proof}

\begin{corollary}
    If the dimension of $A$ is $n_A-m$ and the dimension of $B$ is $n_B-m$ then $\Aut(A)$ and $\Aut(B)$ are isomorphic as abstract groups.
\end{corollary}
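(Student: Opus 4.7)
The plan is to apply Proposition \ref{subgroup} in both directions and then check that the resulting injections are mutually inverse. First I would observe that the proof of Proposition \ref{subgroup} only uses the hypothesis $\dim A = n_A - m$: the covering-radius assumption $\rho(A) = 2$ is not actually invoked in the construction of $\phi^{(e)}$ and its projection $\phi^{(e)}_B$. Therefore the same argument, applied with the roles of $A$ and $B$ swapped, goes through under the corollary's hypothesis $\dim B = n_B - m$, yielding two injective group homomorphisms
\[
\Phi : \Aut(A) \hookrightarrow \Aut(B), \ \phi \mapsto \phi^{(e)}_B, \qquad \Psi : \Aut(B) \hookrightarrow \Aut(A), \ \psi \mapsto \psi^{(e)}_A.
\]

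The remaining task is to show that $\Psi \circ \Phi = \mathrm{id}_{\Aut(A)}$; the other composition is symmetric. The crucial observation is the uniqueness of the linear extension $\phi^{(e)}$: since $\dim A = n_A - m$, the matrix $H_A$ has rank $m$ and its columns span $\F_q^m$, so an element of $\Aut({\cal H}_m)$ whose action on those columns is prescribed is uniquely determined. The analogous statement holds for $H_B$ by the dimension hypothesis on $B$. Hence starting from $\phi \in \Aut(A)$, producing $\phi^{(e)} \in \Aut({\cal H}_m)$, restricting to $\phi^{(e)}_B \in \Aut(B)$ and then re-extending via the columns of $H_B$ must recover $\phi^{(e)}$ itself; projecting back to the $A$-coordinates returns $\phi$. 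A slicker ending would be to invoke finiteness: $\Aut(A)$ and $\Aut(B)$ are finite groups admitting mutual injections, hence they have equal order and either injection is automatically a bijection.

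The main obstacle I foresee is the rigorous justification of the uniqueness of $\phi^{(e)}$, which Proposition \ref{subgroup} treats informally. To make it precise one has to identify a monomial automorphism of a length-$n_A$ code of codimension $m$ with a signed permutation of the columns of $H_A$ together with a compatible element of $\GL(m,q)$ acting on $\F_q^m$; once that identification is set up, the fact that a prescribed monomial action on a spanning set of $\F_q^m$ extends uniquely to $\GL(m,q)$ is immediate linear algebra, and the rest of the argument is pure bookkeeping.
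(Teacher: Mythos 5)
Your argument is correct and matches the paper's (implicit) one: the corollary is stated without proof precisely because it is meant to follow from applying Proposition~\ref{subgroup} with the roles of $A$ and $B$ interchanged, and your finiteness argument (mutual injections between finite groups force equal order, so either injection is an isomorphism) is the clean way to conclude. Your observation that the proof of Proposition~\ref{subgroup} uses only the dimension hypothesis, and not $\rho(A)=2$, is accurate and is exactly what licenses the symmetric application.
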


\begin{theorem}
    If the code $A$ is completely transitive with dimension $n_A-m$ and covering radius $\rho(A)=2$, then the supplementary code $B$ is also completely transitive.
\end{theorem}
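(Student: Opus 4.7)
The plan is to exploit the embedding $\Aut(A)\hookrightarrow\Aut(B)$ given by \Cref{subgroup} and transfer complete transitivity of $A$ to $B$ via a common syndrome-space description of cosets. First I dispose of the case $\rho(B)\le 1$: the proof of \Cref{rho2} shows that $B$ is then either trivial or a Hamming code, both of which are completely transitive. So I may assume $\rho(B)=2$, in which case it suffices to find, inside $\Aut(B)$, a subgroup that acts transitively on cosets of $B$ of minimum weight $1$ and on those of minimum weight $2$, since together with the fixed orbit $\{B\}$ this gives the required $\rho(B)+1=3$ orbits.

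The main identification is via syndromes. Because the columns of $H_m$ form a maximal set of pairwise non-proportional vectors, every nonzero vector of $\F_q^m$ is a unique $\F_q^*$-multiple of exactly one column of $H_m$; write $S_A$ (resp.\ $S_B$) for the set of $\F_q^*$-multiples of columns of $H_A$ (resp.\ $H_B$), so that $\F_q^m\setminus\{\zero\}=S_A\sqcup S_B$. Using $\rho(A)=\rho(B)=2$, the syndrome maps $\sigma_A(\bx)=H_A\bx^T$ and $\sigma_B(\by)=H_B\by^T$ yield bijections: weight-$1$ cosets of $A\leftrightarrow S_A$, weight-$2$ cosets of $A\leftrightarrow S_B$, and symmetrically weight-$1$ cosets of $B\leftrightarrow S_B$, weight-$2$ cosets of $B\leftrightarrow S_A$. (For instance, a weight-$2$ leader $\alpha e_i+\beta e_j$ of $A$ has syndrome $\alpha\bh_i+\beta\bh_j$, which cannot lie in $S_A$ without the coset collapsing to weight $1$, so it lies in $S_B$.)

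For $\phi\in\Aut(A)$, the linear extension $\phi^{(e)}\in\GL(m,q)$ of \Cref{subgroup} permutes the columns of $H_m$ up to scalar and stabilises $J_A$ setwise, hence also $J_B$; equivalently, $\phi^{(e)}$ stabilises both $S_A$ and $S_B$. Moreover $\sigma_A$ and $\sigma_B$ intertwine with $\phi^{(e)}$, so that the actions of $\phi$ on cosets of $A$ and of $\phi^{(e)}_B$ on cosets of $B$ both realise the single linear action of $\phi^{(e)}$ on the shared syndrome space $\F_q^m$. Complete transitivity of $A$ says $\{\phi^{(e)}:\phi\in\Aut(A)\}$ is transitive on $S_A$ and on $S_B$; transferring this through $\sigma_B$, the subgroup $\{\phi^{(e)}_B:\phi\in\Aut(A)\}\subseteq\Aut(B)$ is transitive on weight-$2$ and on weight-$1$ cosets of $B$, so $B$ is completely transitive.

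The step I expect to be the main obstacle is the intertwining verification: making precise that the monomial matrix $\phi^{(e)}_B$ on $\F_q^{n_B}$ acts on cosets exactly as $\phi^{(e)}$ does on syndromes under $\sigma_B$, with due care for the transpose/inverse conventions highlighted by \Cref{transdual}. Once that identification is set up cleanly the orbit count for $\Aut(B)$ is immediate, since it can only be smaller than for the proper subgroup and cannot drop below $\rho(B)+1=3$.
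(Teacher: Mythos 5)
Your proposal is correct and follows essentially the same route as the paper: reduce to the case $\rho(B)=2$, use the embedding $\Aut(A)\hookrightarrow\Aut(B)$ of \Cref{subgroup}, and transfer transitivity by matching weight-$2$ cosets of $B$ with columns of $H_A$ (weight-$1$ cosets of $A$) and weight-$1$ cosets of $B$ with weight-$2$ cosets of $A$. The only difference is in the last matching: where you invoke $\rho(A)=2$ to see that every element of $S_B$ is the syndrome of a weight-$2$ coset of $A$, the paper instead counts the weight-three codewords of ${\cal H}_m$ through a given $J_B$-coordinate with the other two coordinates in $J_A$ and checks that this count, $(q-1)c_2/2$, is positive --- your version of that step is cleaner but establishes the same fact.
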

\begin{proof}
    From \Cref{rho2}, when $A$ is a completely regular code with $\rho(A)=2$, then $B$ is completely regular with $\rho(B)\leq 2$. When $\rho(B)=1$ the code $B$ is a completely transitive code, so we are interested in proving that $B$ is a completely transitive code in the case when $\rho(B)=2$. Since $A$ is completely regular, we have that the dimension of $B$ is $n_B-m$, by \Cref{IAs}. Hence, the minimum distances of $A$ and $B$ are not less than three.
    
    Take two pairs of columns in the parity-check matrix $H_B$ of the code $B$. Say  $\bh_{i1}, \bh_{i2}$ and $\bh_{j1},\bh_{j2}$. Each pair represents a  vector of weight two and we assume that both vectors are at distance two from $B$ and also they are not in the same coset (modulo the code $B$). Therefore, we obtain two different columns $\bh_i, \bh_j$ in $H_A$ (the parity-check matrix of $A$), in such a way that both triples  $\bh_{i1}, \bh_{i2}, \bh_i$ and $\bh_{j1},\bh_{j2}, \bh_j$ are the support of codewords of weight three in ${\cal H}_m$. Since $A$ is completely transitive, there exists $\phi\in \Aut(A)$ taking one of these columns to the other and, from \Cref{subgroup}, we have an automorphism in $\Aut(B)$ taking the pair  $\bh_{i1}, \bh_{i2}$ to $\bh_{j1},\bh_{j2}$. Now, to finish the proof we need to show that taking $\bh_i, \bh_j$ two different columns in $H_B$, we have an automorphism in $\Aut(B)$ which leads $\bh_i$ to $\bh_j$.  It is easy to see that there are $\frac{n-1}{2}(q-1)^2$ codewords of minimum weight in the Hamming code ${\cal H}_m$ containing in its support the coordinate position corresponding to $\bh_i$. Of those codewords, there are $(a'_1 -(q-2))(q-1)/2$ such that they are also codewords of weight three in $B$ ($a'_1$ is the corresponding parameter of the code $B$ in \Cref{de:1.1}). Also there are $b'_1(q-1)$ codewords sharing exactly two coordinates with the support of the code $B$. Hence, 
    \begin{equation}\label{eq}
    \begin{split}
        \frac{n-1}{2}(q-1)^2 &-b'_1(q-1)-\frac{(a'_1-(q-2))(q-1)}{2}=\\ &\frac{q-1}{2}\big[(n-1)(q-1)-2b'_1 -(n_B(q-1)-b'_1-1-(q-2))\big]=\\
        &\frac{q-1}{2}\big[(n_A-1)(q-1)-b'_1 +(q-1)\big]=\\
         &\frac{q-1}{2}\big[n_A(q-1)-b'_1\big]= \frac{(q-1)}{2}\frac{n_Ab_1}{n_B}=\frac{(q-1)c_2}{2}
    \end{split}
    \end{equation}
    is the number of codewords of weight three from ${\cal H}_m$ with the coordinate corresponding to $\bh_i$ in its support and the other two coordinates in the support of the code $A$. The code $B$ will be a completely transitive code when \Cref{eq} gives a number greater than or equal to one, which is obvious. This proves the statement.
\end{proof}

It is worth mentioning that, in some cases, the construction used here can be equivalent to the construction described in \cite{BRZ4}. But this is not always the case as the following examples show.

\begin{example}\label{Ex1}
Consider the binary matrix $H_6$, which the parity-check matrix of the binary Hamming code of length $63$. Take $35$ columns of $H_6$ following the procedure described in \cite{BRZ4}, as the parity-check matrix of the code $A$. The code $A$ has parameters $[35,29,3;2]_2$ and the supplementary code $B$ has parameters $[28,22,3;2]_2$. Computationally, we have verified that both codes are completely regular but not completely transitive.
\end{example}

\begin{example}\label{Ex2}
Now, consider the binomial code $A'=C^{(7,4)}$, whose parity-check matrix $H^{(7,4)}$ contains as columns all the binary vectors of length $7$ and weight $4$ (see \cite{RZ}). Note that adding all the rows of $H^{(7,4)}$ gives the zero vector, hence the dimension of $(A')^\perp$ is $6$. The code $A'$ has parameters $[35,29,3,2]_2$ and the supplementary code $B'$ has parameters $[28,22,3;2]_2$. In this case, we have computationally verified that both codes are completely regular and also completely transitive.
\end{example}

The codes $A$ and $A'$ (respectively $B$ and $B'$) in \Cref{Ex1} and \Cref{Ex2} are completely regular codes with the same parameters. However, $A$ and $A'$ (resp. $B$ and $B'$) are not equivalent since $A'$ (resp. $B'$) is completely transitive, but $A$ (resp. $B$) is not.

\section*{Acknowledgements}

This work has been partially supported by the Spanish grants TIN2016-77918-P, (AEI/FEDER, UE). The research of the third author was carried out at the IITP RAS
at the expense of the Russian Fundamental Research Foundation (project no. 19-01-00364).

\nocite{*}

\bibliographystyle{plain}

\end{document}